\newtheorem{theorem}{Theorem}
\newtheorem{lemma}{Lemma}
\newtheorem{claim}{Claim}
\newcommand{\case}[1]{\noindent {\bf Case #1.}}	
\newcommand \e {\hfill {\tiny $\blacksquare$}}
\title{{\bf An Optimal $\chi$-Bound for ($P_6$, diamond)-Free Graphs}}
\author{Kathie Cameron \thanks{Department of Mathematics, Wilfrid Laurier University,
Waterloo, ON, Canada, N2L 3C5. Email: \texttt{kcameron@wlu.ca}. Research supported by the Natural Sciences and
Engineering Research Council of Canada (NSERC) grant RGPIN-2016-06517.}  
\and Shenwei Huang \thanks{Department of Mathematics, Wilfrid Laurier University,
Waterloo, ON, Canada, N2L 3C5. Email: \texttt{dynamichuang@gmail.com}. Research supported by the Natural Sciences and
Engineering Research Council of Canada (NSERC) grant RGPIN-2016-06517.} 
\and Owen Merkel \thanks{Department of Mathematics, Wilfrid Laurier University,
Waterloo, ON, Canada, N2L 3C5. Email: \texttt{owen.merkel@uwaterloo.ca}. Research supported by the Natural Sciences and
Engineering Research Council of Canada (NSERC) grant RGPIN-2016-06517 and an NSERC Undergraduate Student Research Award.}}
\date{August 29, 2018}
\begin{document}

\maketitle

\begin{abstract} 
Given two graphs $H_1$ and $H_2$, a graph $G$ is $(H_1,H_2)$-free if it contains no induced subgraph isomorphic to $H_1$ or $H_2$. 
Let $P_t$ be the path on $t$ vertices and $K_t$ be the complete graph on $t$ vertices.
The diamond is the graph obtained from $K_4$ by removing an edge.
In this paper we show that every ($P_6$, diamond)-free graph $G$
satisfies $\chi(G)\le \omega(G)+3$, where $\chi(G)$ and $\omega(G)$ are the chromatic number
and clique number of $G$, respectively. 
Our bound is attained by the complement of the famous 27-vertex Schl\"afli graph.
Our result unifies previously known results
on the existence of linear $\chi$-binding functions for several graph classes.
Our proof is based on a reduction via the Strong Perfect Graph Theorem to imperfect ($P_6$, diamond)-free graphs,
a careful analysis of the structure of those graphs, and a computer search that relies on a well-known  
characterization of 3-colourable $(P_6,K_3)$-free graphs.
\end{abstract}

\section{Introduction}

All graphs in this paper are finite and simple.
We say that a graph $G$ {\em contains} a graph $H$ if $H$ is
isomorphic to an induced subgraph of $G$.  A graph $G$ is
{\em $H$-free} if it does not contain $H$. 
For a family  $\mathcal{H}$ of graphs,
$G$ is {\em $\mathcal{H}$-free} if $G$ is $H$-free for every $H\in \mathcal{H}$.
When $\mathcal{H}$ consists of two graphs, we write
$(H_1,H_2)$-free instead of $\{H_1,H_2\}$-free.
As usual, $P_t$ and $C_s$ denote
the path on $t$ vertices and the cycle on $s$ vertices, respectively. The complete
graph on $n$ vertices is denoted by $K_n$.
The graph $K_3$ is also referred to as the {\em triangle}. 
Let the {\em diamond} be the graph obtained from $K_4$ by removing an edge.
For two graphs $G$ and $H$, we use $G+H$ to denote the \emph{disjoint union} of $G$ and $H$.
For a positive integer $r$, we use $rG$ to denote the disjoint union of $r$ copies of $G$.
The \emph{complement} of $G$ is denoted by $\overline{G}$.
A {\em clique} (resp. {\em stable set}) in a graph is a set of pairwise adjacent (resp. non-adjacent) vertices.
A \emph{$q$-colouring} of a graph $G$ is a function $\phi:V(G)\longrightarrow \{ 1, \ldots ,q\}$ such that
$\phi(u)\neq \phi(v)$ whenever $u$ and $v$ are adjacent in $G$.
Equivalently, a $q$-colouring of $G$ is a partition of $V(G)$ into $q$ stable sets.
A graph is {\em $q$-colourable} if it admits a $q$-colouring.
The \emph{chromatic number} of a graph $G$, denoted by
$\chi (G)$, is the minimum number $q$ for which $G$ is $q$-colourable.
The \emph{clique number} of $G$, denoted by $\omega(G)$, is the size of a largest clique in $G$.
Obviously, $\chi(G)\ge \omega(G)$ for any graph $G$.

A family $\mathcal{G}$ of graphs is said to be \emph{$\chi$-bounded} if 
there exists a function $f$ such that for every graph
$G\in \mathcal{G}$, every induced subgraph $H$ of $G$ satisfies
$\chi(H)\le f(\omega(H))$. The function $f$ is called a \emph{$\chi$-binding} function
for $\mathcal{G}$.
The notion of $\chi$-bounded families was introduced by Gy{\'a}rf{\'a}s \cite{Gy87} in 1987.
Since then it has received considerable attention for $\mathcal{H}$-free graphs.
 
We briefly review some results in this area.
A {\em hole} in a graph is an induced cycle of length at least 4.
An {\em antihole} is the complement of a hole.  A hole or antihole is {\em odd} or {\em even}
if it is of odd or even length, respectively.
The famous Strong Perfect Graph Theorem \cite{CRST06} says that the class of graphs without odd holes or odd antiholes
is $\chi$-bounded and the $\chi$-binding function is the identity function $f(x)=x$.
If we only forbid odd holes, then the resulting class remains $\chi$-bounded but the best known $\chi$-binding function
is double exponential \cite{SS16}. On the other hand, if even holes are forbidden, then a linear $\chi$-binding function exists \cite{BCHRS08}:
every even-hole-free graph $G$ satisfies $\chi(G)\le 2\omega(G)-1$.
In recent years, there has been an ongoing project led by Scott and Seymour that aims to determine the existence of $\chi$-binding functions
for classes of graphs without holes of various lengths. We refer the reader to the recent survey by Scott and Seymour \cite{SS18} 
for various nice results.
One thing to note is that most $\chi$-binding functions in this setting are exponential.

Another line of research is the study of $H$-free graphs for a fixed graph $H$. A classsical result of  Erd{\H{o}}s \cite{Er59}
shows that the class of $H$-free graphs is not $\chi$-bounded if $H$ contains a cycle. Gy{\'a}rf{\'a}s \cite{Gy73} conjectured
that the converse is also true (known as the Gy{\'a}rf{\'a}s  Conjecture), and proved the conjecture when $H=P_t$ \cite{Gy87}:
every $P_t$-free graph $G$ has $\chi(G)\le (t-1)^{\omega(G)-1}$.  Similar to results in \cite{SS18}, 
this $\chi$-binding function is exponential in $\omega(G)$. 
It is natural to ask the following question.

$\bullet$ Is it possible to improve the exponential bound for $P_t$-free graphs to a {\em polynomial} bound?

\noindent This turns out to be a very difficult question, and not much progress has been made over the past 30 years. 
It remains open whenever $t\ge 5$. (For $t\le 4$, $P_t$-free graphs are perfect and hence $f(x)=x$ is the $\chi$-binding function.)
Therefore, researchers have started to investigate subclasses of $P_t$-free graphs, hoping to discover techniques and methods that
would be useful for tackling the problem. A natural type of subclass is to forbid a second graph in addition to forbidding $P_t$.
For example, it was shown by Gaspers and Huang \cite{GH17} that every $(P_6,C_4)$-free graph $G$ has $\chi(G)\le \frac{3}{2}\omega(G)$.
This 3/2 bound was improved recently by Karthick and Maffray \cite{KMa18} to the optimal bound 5/4: 
every $(P_6,C_4)$-free graph $G$ has $\chi(G)\le \frac{5}{4} \omega(G)$.
In another work, Karthick and Maffray \cite{KM16} showed that 
every ($P_5$, diamond)-free graph $G$ satisfies $\chi(G)\le \omega(G)+1$.
Bharathi and Choudum \cite{BC16} gave a cubic $\chi$-binding function for the class of ($P_2+P_3$, diamond)-free graphs.
For the class of ($P_6$, diamond)-free graphs, a common superclass of ($P_5$, diamond)-free graphs 
and ($P_2+P_3$, diamond)-free graphs,
Karthick and Mishra \cite{KM18} proved that $f(x)=2x+5$ is a $\chi$-binding function, greatly improving
the result for ($P_2+P_3$, diamond)-free graphs. In the same paper, they also obtained an optimal $\chi$-bound for  ($P_6$, diamond)-free graphs 
when the clique number is 3: every ($P_6$, diamond, $K_4$)-free graph is 6-colourable.
For more results of this flavor, see \cite{CKS07,CKS08,HLR12,Wa80}.

\subsection*{Our Contributions} 
In this paper, we give an optimal $\chi$-bound for the class of 
($P_6$, diamond)-free graphs. We prove that each ($P_6$, diamond)-free graph $G$ satisfies $\chi(G)\le \omega(G)+3$
(\autoref{thm:main} in \autoref{sec:main}).
The bound is tight since it is attained by the complement of the famous 27-vertex Schl\"afli graph \cite{KM18}.
Our result unifies the results on the existence of linear $\chi$-binding functions for the class of ($P_5$, diamond)-free graphs \cite{KM16}, 
($P_2+P_3$, diamond)-free graphs \cite{BC16}, and ($P_6$, diamond)-free graphs \cite{KM18}, and answers an open question in \cite{KM18}.

The remainder of the paper is organized as follows. We present some preliminaries in \autoref{sec:pre}
and prove some structural properties of imperfect ($P_6$, diamond)-free graphs in \autoref{sec:imperfect}.
We prove our main result in \autoref{sec:main} and give some open problems in \autoref{sec:conclude}.

\section{Preliminaries}\label{sec:pre}

For general graph theory notation we follow \cite{BM08}.
Let $G=(V,E)$ be a graph. 
The \emph{neighbourhood} of a vertex $v$, denoted by $N_G(v)$, is the set of vertices adjacent to $v$.
For a set $X\subseteq V(G)$, let $N_G(X)=\bigcup_{v\in X}N_G(v)\setminus X$ and $N_G[X]=N_G(X)\cup X$.
The \emph{degree} of $v$, denoted by $d_G(v)$, is equal to $|N_G(v)|$.
For $x\in V$ and $S\subseteq V$, we denote by $N_S(x)$ the set of neighbours of $x$ that are in $S$,
i.e., $N_S(x)=N_G(x)\cap S$.
For $X,Y\subseteq V$, we say that $X$ is \emph{complete} (resp. \emph{anti-complete}) to $Y$
if every vertex in $X$ is adjacent (resp. non-adjacent) to every vertex in $Y$. For $x \in V$ and $Y \subseteq V$, we say $x$ is \emph{complete} (resp. \emph{anti-complete}) to $Y$ if $x$ is adjacent (resp. non-adjacent) to every vertex in $Y$.
A vertex subset $K\subseteq  V$ is a \emph{clique cutset} if $G-K$ has more components than $G$ and $K$
induces a clique. 
For $S\subseteq V$, the subgraph \emph{induced} by $S$ is denoted by $G[S]$.
We shall often write $S$ for $G[S]$ if the context is clear.
We say that a vertex $v$ {\em distinguishes} $u$ and $w$ if $v$ is adjacent to exactly one of $u$ and $w$.
A component of a graph is {\em trivial} if it has only one vertex, and {\em non-trivial} otherwise.

A graph $G$ is {\em perfect} if $\chi(H)=\omega(H)$ for each induced subgraph $H$ of $G$.
An {\em imperfect} graph is a graph that is not perfect.
One of the most celebrated theorems in graph theory is the Strong Perfect Graph Theorem \cite{CRST06}.

\begin{theorem}[\cite{CRST06}]\label{thm:SPGT}
A graph is perfect if and only if it does not contain an odd hole or an odd antihole as an induced subgraph.
\end{theorem}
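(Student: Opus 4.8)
The theorem has two directions of wildly different difficulty, and I would separate them at once. The easy direction asserts that an odd hole or odd antihole forces imperfection, so it suffices to check these two families directly: the odd hole $C_{2k+1}$ with $k\ge 2$ has $\omega=2$ but $\chi=3$, and the odd antihole $\overline{C_{2k+1}}$ has $\omega=k$ but $\chi=k+1$. Hence any graph containing either as an induced subgraph already fails $\chi=\omega$ on that subgraph and so is imperfect. I would dispatch this direction in a sentence.

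The entire content lies in the converse: every \emph{Berge} graph (one with no odd hole and no odd antihole) is perfect. I would attack this by a minimum counterexample. Suppose $G$ is a Berge graph that is imperfect with $|V(G)|$ minimum; then $G$ is \emph{minimal imperfect}, so every proper induced subgraph is perfect. Using the classical theory of minimal imperfect (equivalently, partitionable) graphs due to Lov\'asz and Padberg, together with Chv\'atal's star-cutset lemma, I would record a toolbox of structural constraints on $G$: it has $\omega(G)\,\alpha(G)+1$ vertices, where $\alpha(G)$ is the size of a largest stable set; it has no clique cutset; it has no star cutset; and, as the hardest-won input, it has no balanced skew partition. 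The aim is to show that no Berge graph can satisfy all of these constraints simultaneously.

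The crux is a decomposition theorem for Berge graphs: every Berge graph either belongs to one of a few explicitly perfect \emph{basic} classes --- bipartite graphs and line graphs of bipartite graphs, the complements of these, and double split graphs --- or admits one of a short list of structural decompositions, namely a $2$-join, a $2$-join in the complement, or a balanced skew partition. Granting this, the argument closes quickly. The basic classes are each directly perfect, so $G$ is not basic; a $2$-join (respectively a $2$-join in $\overline{G}$) can be shown to preserve perfection by an explicit gluing-and-recolouring argument, contradicting minimality; and the no-star-cutset and no-balanced-skew-partition properties of a minimal imperfect graph rule out the remaining decomposition. Every possible decomposition therefore contradicts some property of a minimal imperfect graph, so no minimum counterexample exists.

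The overwhelming obstacle is proving the decomposition theorem itself, which is where essentially all of the work lives. One must branch on which configurations the Berge graph contains --- prisms, long even structures, appearances of gadgets such as stretched holes --- and in each case either locate a basic structure or extract one of the permitted cuts, all while exploiting the Berge hypothesis to forbid the odd holes and odd antiholes that a careless amalgamation would introduce. Establishing that a minimal imperfect Berge graph has no balanced skew partition is a particularly delicate subcase, historically the last piece to fall. I expect this structural case analysis, rather than the perfection-preserving lemmas or the easy direction, to consume essentially the entire proof.
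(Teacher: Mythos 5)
This statement is the Strong Perfect Graph Theorem, which the paper does not prove: it is imported verbatim from the cited reference [CRST06] (Chudnovsky, Robertson, Seymour, and Thomas) and used as a black box, so there is no in-paper proof to compare yours against. Your sketch is a faithful roadmap of the actual proof in the literature: the easy direction via $\chi(C_{2k+1})=3>2=\omega$ and $\chi(\overline{C_{2k+1}})=k+1>k=\omega$ is correct, and the hard direction does proceed by minimum counterexample, the Lov\'asz/Padberg properties of minimal imperfect graphs, Chv\'atal's star-cutset lemma, and the decomposition theorem for Berge graphs (basic classes; $2$-join, complement $2$-join, or balanced skew partition), with the balanced-skew-partition exclusion being the delicate final piece.

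That said, what you have written is an outline, not a proof. You explicitly grant the decomposition theorem, and as you yourself note, that is where essentially all of the content lives --- roughly 150 pages of case analysis in the original paper. The perfection of the basic classes and the perfection-preserving property of $2$-joins are also only asserted, not argued. As a standalone submission this would not establish the theorem; in the context of this paper, the correct move is simply to cite [CRST06], exactly as the authors do. If your goal was to demonstrate understanding of how the theorem is proved, the sketch is accurate and well organized; if the goal was to supply a proof the paper omits, no reasonable proof can be supplied here, and the citation is the right resolution.
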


Another useful result is a characterization of 3-colourable $(P_6,K_3)$-free graphs.
\begin{theorem}[\cite{RST02}]\label{thm:P6C3}
A $(P_6,K_3)$-free graph is $3$-colourable if and only if it does not contain the Gr{\"o}tzsch graph (see \autoref{fig:Grotzsch}) 
as an induced subgraph.
\end{theorem}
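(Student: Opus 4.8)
The plan is to prove the two directions separately, with the forward implication being the substantial one. For the ``only if'' direction I would first record two facts about the Gr\"otzsch graph that can be checked by inspection: it is itself $(P_6,K_3)$-free, and its chromatic number equals $4$. Since the chromatic number of an induced subgraph never exceeds that of the whole graph, any graph that \emph{contains} the Gr\"otzsch graph satisfies $\chi(G)\ge 4$ and so is not $3$-colourable. This settles the ``only if'' direction immediately.

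For the ``if'' direction I would argue the contrapositive: if $G$ is $(P_6,K_3)$-free and $\chi(G)\ge 4$, then $G$ contains an induced Gr\"otzsch graph. By repeatedly deleting vertices while the chromatic number stays at least $4$, I may pass to an induced subgraph that is $4$-\emph{vertex-critical}, i.e.\ $\chi(G)\ge 4$ but every proper induced subgraph is $3$-colourable; this subgraph is still $(P_6,K_3)$-free, and it has minimum degree at least $3$. The first reduction is a constraint on the odd holes of $G$. Since $G$ is triangle-free it is non-bipartite, so a shortest odd cycle is an induced odd hole of length at least $5$; and a short check shows that $C_t$ contains an induced $P_6$ for every $t\ge 7$ (six consecutive vertices of $C_t$ induce a $P_6$ once $t\ge 7$). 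Hence the only odd hole $G$ can contain is a $C_5$, and in particular $G$ contains an induced $C_5$, say $u_1u_2u_3u_4u_5u_1$.

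The heart of the argument is to grow this $C_5$ into a Gr\"otzsch graph, which is precisely the Mycielskian of $C_5$: five cycle vertices, five ``shadow'' vertices $v_1,\dots,v_5$ (with $v_i$ adjacent to $u_{i-1}$ and $u_{i+1}$), and one apex $w$ adjacent to all of the $v_i$. Because $G$ is triangle-free, the neighbourhood of each $u_i$ is a stable set, so every other vertex has at most two neighbours on the $C_5$, and when it has exactly two these lie at distance $2$ along the cycle. I would partition $V(G)\setminus\{u_1,\dots,u_5\}$ according to this attachment type and use $4$-criticality together with $P_6$-freeness to force each class to behave like the shadow vertices and the apex: criticality forbids redundant vertices and attachments that could be absorbed into a $3$-colouring, while $P_6$-freeness rules out vertices whose neighbourhoods would spread too far around the cycle or create a long induced path. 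The goal is to conclude that $G$ is exactly the Mycielskian of $C_5$, hence the Gr\"otzsch graph.

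I expect the main obstacle to be exactly this last structural step. A priori there could be several distinct $4$-vertex-critical $(P_6,K_3)$-free graphs, and the real content of the theorem is that the Gr\"otzsch graph is the \emph{unique} one; everything then follows because a non-$3$-colourable graph contains a $4$-critical induced subgraph. Controlling the case analysis is where the work lies: one must bound the number of vertices of each attachment type and forbid every chord or extra edge that would either create a triangle, produce an induced $P_6$, or render the graph $3$-colourable. The delicate point is that none of the three hypotheses suffices alone, so the argument must exploit triangle-freeness, $P_6$-freeness, and criticality \emph{simultaneously} at each step.
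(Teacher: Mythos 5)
This statement is imported verbatim from \cite{RST02}; the paper you are being compared against gives no proof of it, so there is no internal argument to measure your attempt against. Your ``only if'' direction is complete: the Gr\"otzsch graph is $(P_6,K_3)$-free and $4$-chromatic, and the chromatic number is monotone under taking induced subgraphs. The framing of your ``if'' direction is also sound as far as it goes: passing to a $4$-vertex-critical induced subgraph (which is connected, still $(P_6,K_3)$-free, and has minimum degree at least $3$), observing that a shortest odd cycle in a triangle-free non-bipartite graph is an induced odd hole, and noting that $C_t$ with $t\ge 7$ contains an induced $P_6$ so the only available odd hole is a $C_5$ --- all of this is correct, and you have correctly identified that the theorem is equivalent to the assertion that the Gr\"otzsch graph is the unique $4$-vertex-critical $(P_6,K_3)$-free graph.

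The gap is that the step carrying all the weight --- showing that the critical graph grown around this $C_5$ must be exactly the Mycielskian of $C_5$ --- is announced rather than argued. Triangle-freeness does give you that every outside vertex has at most two neighbours on the $C_5$ and that two such neighbours lie at distance two on the cycle, but from there you supply no argument for why a shadow vertex of each of the five attachment types must exist, why an apex vertex adjacent to five pairwise non-adjacent shadows must exist, how vertices with zero or one neighbour on the $C_5$ are eliminated or absorbed into a $3$-colouring, or why criticality and $P_6$-freeness leave no room for additional vertices or edges. That case analysis is the entire content of the Randerath--Schiermeyer--Tewes theorem, and as written your proposal is a plan for a proof rather than a proof --- something your own final paragraph concedes. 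To close the gap you would either need to carry out the structural analysis in full (the original paper does so over several pages of cases) or simply cite \cite{RST02}, as the present paper does.
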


\begin{figure}[h!]
\centering
\begin{tikzpicture}[scale=0.6]
\tikzstyle{vertex}=[circle, draw, fill=black, inner sep=1pt, minimum size=5pt]
        \node[vertex, label=below:0](1) at (0,0) {};
        \node[vertex, label=1](2) at (0,2) {};
        \node[vertex, label=2](3) at ({2*cos(18)},{2*sin(18}) {};
        \node[vertex, label=3](4) at ({2*sin(36)},-{2*cos(36}) {};
        \node[vertex, label=4](5) at (-{2*sin(36)},-{2*cos(36}) {};
        \node[vertex, label=5](6) at (-{2*cos(18)},{2*sin(18}) {};
        \node[vertex, label=6](7) at (0,4) {};
        \node[vertex, label=7](8) at ({4*cos(18)},{4*sin(18}) {};
	 \node[vertex, label=below:8](9) at ({4*sin(36)},-{4*cos(36}) {};
	 \node[vertex, label=below:9](10) at (-{4*sin(36)},-{4*cos(36}) {};
	 \node[vertex, label=10](11) at (-{4*cos(18)},{4*sin(18}) {};
      
       \Edge(1)(2)
 	\Edge(1)(3)
 	\Edge(1)(4)
 	\Edge(1)(5)
	\Edge(1)(6)
	\Edge(2)(8)
	\Edge(2)(11)
	\Edge(3)(7)
	\Edge(3)(9)
	\Edge(4)(8)
	\Edge(4)(10)
	\Edge(5)(9)
	\Edge(5)(11)
	\Edge(6)(7)
	\Edge(6)(10)
	\Edge(7)(8)
	\Edge(8)(9)
	\Edge(9)(10)
	\Edge(10)(11)
	\Edge(7)(11)
\end{tikzpicture}
\caption{The Gr{\"o}tzsch graph.}\label{fig:Grotzsch}
\end{figure}
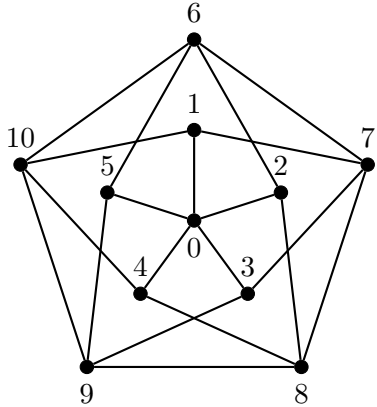

\section{Structure of Imperfect ($P_6$, diamond)-Free Graphs}\label{sec:imperfect}

In this section we study the structure of imperfect ($P_6$, diamond)-free graphs.
It follows from \autoref{thm:SPGT} that every imperfect ($P_6$, diamond)-free graph
contains an induced $C_5$. Let $G=(V,E)$ be an imperfect ($P_6$, diamond)-free graph and  let $Q=\{v_1, v_2, v_3, v_4, v_5\}$ induce a $C_5$ in $G$ with edges $v_iv_{i+1}$ for $i=1,\ldots,5$. Note that all indices are modulo 5. We partition $V(G) \setminus Q$ into the following subsets:
\begin{equation*} \label{eq1}
\begin{split}
A_i & = \{v\in V\setminus Q: N_{Q}(v)=\{v_i\}\}, \\
B_{i,i+1}  & = \{v\in V\setminus Q: N_{Q}(v)=\{v_i,v_{i+1}\}\}, \\
C_{i,i+2}  & = \{v\in V\setminus Q: N_{Q}(v)=\{v_i, v_{i+2}\}\}, \\
F_i  & = \{v\in V\setminus Q: N_{Q}(v)=\{v_i,v_{i-2},v_{i+2}\}\}, \\
Z & = \{v\in V\setminus Q: N_{Q}(v)=\emptyset\}. \\
\end{split}
\end{equation*}
Let $A=\bigcup_{i=1}^{5}A_i$, $B=\bigcup_{i=1}^{5}B_{i,i+1}$,  $C=\bigcup_{i=1}^{5}C_{i,i+2}$, 
and $F=\bigcup_{i=1}^{5}F_i$. Since $G$ is diamond-free, it follows that $N(Q)=A\cup B\cup C\cup F$ and thus 
$V(G)=Q\cup A\cup B\cup C\cup F\cup Z$.
We now prove a number of useful properties about those subsets.

\begin{enumerate}[label= (\arabic*)]

\item Each component of $A_i$ is a clique.\label{item:1}

This follows directly from the fact that $G$ is diamond-free. \e

\item The sets $A_i$ and $A_{i+1}$ are anti-complete.\label{item:2}

If $a_1\in A_i$ and $a_2\in A_{i+1}$ are adjacent, then $\{a_1,a_2,v_{i+1},v_{i+2},v_{i+3},v_{i+4}\}$ induces a $P_6$. \e

\item  The sets $A_i$ and $A_{i+2}$ are complete.\label{item:3}

If $a_1\in A_i$ and $a_2\in A_{i+2}$ are not adjacent, then $\{a_2,v_{i+2},v_{i+3},v_{i+4},v_i,a_1\}$ induces a $P_6$,
a contradiction. \e

\item Each $B_{i,i+1}$ is a clique.\label{item:4}

If $b_1,b_2\in B_{i,i+1}$ are not adjacent, then $\{b_1,b_2,v_i,v_{i+1}\}$ induces a diamond. \e

\item The set $B=B_{i,i+1}\cup B_{i+2,i+3}$ for some $i$.\label{item:5}

We show that for each $i$ either $B_{i,i+1}$ or $B_{i-1,i}$ is empty.
Suppose not. Let $b_1\in B_{i,i+1}$ and $b_2\in B_{i-1,i}$.
Then either $\{b_1,v_{i+1},v_{i+2},v_{i+3},v_{i+4},b_2\}$ induces a $P_6$ or
$\{b_1,b_2,v_i,v_{i+1}\}$ induces a diamond, depending on whether $b_1$ and $b_2$ are adjacent.
Therefore, the property holds. \e 

\item The set $B_{i,i+1}$ is anti-complete to $A_i\cup A_{i+1}$.\label{item:6}

By symmetry, it suffices to show that $B_{i,i+1}$ is anti-complete to $A_{i}$.
If $a\in A_i$ and $b\in B_{i,i+1}$ are adjacent, then $\{a,b,v_i,v_{i+1}\}$ induces a diamond. \e

\item The set $B_{i,i+1}$ is complete to $A_{i-1}\cup A_{i+2}$.\label{item:7}

By symmetry, it suffices to show that $B_{i,i+1}$ is complete to $A_{i+2}$.
If $a\in A_{i+2}$ and $b\in B_{i,i+1}$ are not adjacent, then $\{a,v_{i+2},v_{i+3},v_{i+4},v_i,b\}$ induces a $P_6$. \e

\item Each $C_{i,i+2}$ is a stable set. \label{item:8}

If $c_1,c_2\in C_{i,i+2}$ are adjacent, then $\{c_1,c_2,v_{i},v_{i+2}\}$ induces a diamond. \e

\item Each vertex in $C_{i,i+2}$ is either complete or anti-complete to each component of $A_i$ and $A_{i+2}$.\label{item:9}

If $c\in C_{i,i+2}$ is adjacent to $a_1\in A_i$ ($A_{i+2}$) but not adjacent to $a_2\in A_i$ ($A_{i+2}$)  with $a_1a_2\in E$, then 
$\{a_1,a_2,c,v_i\}$ ($\{a_1,a_2,c,v_{i+2}\}$) induces a diamond. \e

\item Each vertex in $C_{i,i+2}$ has at most one neighbour in each component of $A_{i+1}$, $A_{i+3}$ and $A_{i+4}$. \label{item:10}

Suppose that a vertex $c \in C_{i,i+2}$ has two neighbours $a_1$ and $a_2$ in the same component of $A_j$ where
$j \neq i$ and $j \neq i+1$. Since each component of $A_j$ is a clique by \ref{item:1}, $a_1a_2 \in E$. 
Then $\{c, a_1, a_2, v_j\}$ induces a diamond in $G$. \e

\item Each vertex in $C_{i,i+2}$ is anti-complete to each non-trivial component of $A_{i+1}$. \label{item:11}

Suppose that $c\in C_{i,i+2}$ has a neighbour $a_1$ in a non-trivial component of  $A_{i+1}$.
Let $a_2$ be a vertex in that component other than $a_1$. By \ref{item:10}, we have that $ca_2 \notin E$.
Then $\{a_2,a_1,c,v_{i+2},v_{i+3},v_{i+4}\}$ induces a $P_6$. \e

\item The set $C_{i,i+2}$ is anti-complete to $B_{j,j+1}$ if $j \neq i+3$.
Moreover each vertex in $C_{i,i+2}$ has at most one neighbour in $B_{i+3,i+4}$.\label{item:12}

Suppose that $c \in C_{i,i+2}$ is adjacent to some $b \in B_{j,j+1}$ for some $j\neq i+3$. 
Since $b$ and $c$ have exactly one common neighbour in $Q$, it follows that $\{b, c, v_{j}, v_{j+1}\}$ induces a diamond.
This proves the first part of the claim.
Suppose that $c$ is adjacent to two vertices $b_1, b_2 \in B_{i+3, i+4}$.
By \ref{item:4}, $b_1b_2\in E$. Then $\{c, b_1, b_2, v_{i+3}\}$ induces a diamond. \e

\item Each $F_i$ has at most one vertex. Moreover, $F$ is a  stable set.\label{item:13}

If $F_i$ contains two vertices $f_1$ and $f_2$, then either $\{v_i,v_{i+2},f_1,f_2\}$ or $\{v_{i-2},v_{i+2},f_1,f_2\}$ 
induces a diamond, depending on whether $f_1f_2\in E$. This proves the first part of the claim.
Let $f_1\in F_i$ and $f_2\in F_j$ with $i\neq j$. Note that there exists an index $k$ such that $v_k$ is a common neighbour
of $f_1$ and $f_2$ and $v_{k+1}$ is adjacent to exactly one of $f_1$ and $f_2$. 
If $f_1f_2\in E$, then $\{f_1,f_2,v_k,v_{k+1}\}$ induces a diamond. \e

\item The set $F_i$ is anti-complete to $A_{i+2}\cup A_{i+3}$.\label{item:14}

By symmetry, it suffices to show that $F_i$ is anti-complete to $A_{i+2}$.
If $f\in F_i$ is adjacent to $a\in A_{i+2}$, then $\{a,f,v_{i+2},v_{i+3}\}$ induces a diamond. \e 

\item Each vertex in $F_i$ is either complete or anti-complete to each component of $A_i$.\label{item:15}

If $f\in F_i$ is adjacent to $a_1\in A_i$ but not to $a_2\in A_i$ with $a_1a_2\in E$, then $\{a_1,a_2,f,v_i\}$ induces a diamond. \e

\item Each vertex in $F_i$ has at most one neighbour in each component of $A_{i+1}$ and $A_{i+4}$.\label{item:16}

If $f\in F_i$ has two neighbours $a_1$ and $a_2$ in the same component of $A_{i+1}$ ($A_{i+4}$),
then $\{f,a_1,a_2,v_{i+1}\}$ ($\{f,a_1,a_2,v_{i+4}\}$) induces a diamond. \e

\item The set $F_i$ is anti-complete to $B_{j,j+1}$ if $j \neq i+2$ and complete to $B_{j,j+1}$ if $j = i + 2$.\label{item:17}

If $f\in F_i$ is not adjacent to $b\in B_{i+2,i+3}$, then $\{f,b,v_{i+2},v_{i+3}\}$ induces a diamond. This proves the
second part of the claim. Note that $f\in F_i$ and $b\in B\setminus B_{i+2,i+3}$ have exactly one common neighbour in $Q$, say $v_k$.
If $fb\in E$, then $\{f,b,v_k,v_{k+1}\}$ or $\{f,b,v_k,v_{k-1}\}$ induces a diamond. \e

\item The set $F_i$ is anti-complete to $C_{j,j+2}$  if $j \neq i-1$. \label{item:18}

Let $f\in F_i$. Note that if $j\neq i-1$, then each vertex $c\in C_{j,j+2}$ is adjacent to exactly one of $v_{i+2}$ and $v_{i+3}$.
If $fc\in E$, then $\{v_{i+2}, v_{i+3}, f, c\}$ induces a diamond. \e

\item If $A_i$ is not stable, then $A_{i+2}=A_{i+3}=B_{i+1,i+2}=B_{i-1,i-2}=\emptyset$.\label{item:19}

Suppose that $A_i$ contains an edge $a_1a_2$. If there is a vertex $x$ in $A_{i+2}\cup A_{i+3}\cup B_{i+1,i+2}\cup B_{i-1,i-2}$,
then $x$ is adjacent to $a_1$ and $a_2$ by \ref{item:3} and \ref{item:7}. Then $\{x,a_1,a_2,v_i\}$ induces a diamond. \e

\item If $A_i$ is not empty, then each of $B_{i+1,i+2}$ and $B_{i-1,i-2}$ contains at most one vertex.\label{item:20}

Let $a\in A_i$. If $B_{i+1,i+2}$ (resp. $B_{i-1,i-2}$) contains two vertices $b_1$ and $b_2$, 
then $\{a,b_1,b_2,v_{i+1}\}$ (resp. $\{a,b_1,b_2,v_{i-1}\}$) induces a diamond by \ref{item:4} and \ref{item:7}. \e

\item The set $Z$ is anti-complete to $A\cup B$.\label{item:21}

Suppose that $z\in Z$ has a neighbour $x\in A\cup B$. By symmetry, we may assume that $x$ is adjacent to $v_i$ but
adjacent to none of $v_{i+2}$, $v_{i+3}$ and $v_{i+4}$. Then $\{z,x,v_i,v_{i+4},v_{i+3},v_{i+2}\}$ induces a $P_6$. \e
\end{enumerate}

\section{The Optimal $\chi$-Bound}\label{sec:main}

In this section, we derive an optimal $\chi$-bound for ($P_6$, diamond)-free graphs. An {\em atom} is a graph without clique cutsets. 
Two non-adjacent vertices $u$ and $v$ in a graph $G$ are {\em comparable} if $N(u)\subseteq N(v)$ or $N(v)\subseteq N(u)$. 
The following is the main result of this paper.

\begin{theorem}\label{thm:main}
Let $G$ be a ($P_6$, diamond)-free graph. Then $\chi(G)\le \omega(G)+3$.
\end{theorem}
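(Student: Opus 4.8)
The plan is to prove $\chi(G) \le \omega(G) + 3$ by reducing to a well-understood core case. First I would use the standard clique-cutset decomposition: since $\chi$ and $\omega$ are both governed by the blocks of a clique-cutset decomposition (colourings of pieces sharing only a clique can be permuted to agree on the clique), it suffices to prove the bound for \emph{atoms}, i.e.\ graphs without clique cutsets. So the first reduction is ``we may assume $G$ is an atom.'' Second, if $G$ is perfect then $\chi(G)=\omega(G)$ and we are done immediately, so by \autoref{thm:SPGT} we may assume $G$ is imperfect and therefore contains an induced $C_5$. This lets me invoke the entire structural analysis of \autoref{sec:imperfect}: fix the $C_5$ on $Q=\{v_1,\dots,v_5\}$ and use the partition $V=Q\cup A\cup B\cup C\cup F\cup Z$ together with properties \ref{item:1}--\ref{item:21}.

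The heart of the argument is then to control each part of this partition and colour it with few colours beyond $\omega$. I would first dispose of the set $Z$: by \ref{item:21} it is anti-complete to $A\cup B$, and one should check (using $P_6$-freeness and the atom/comparability hypotheses) that $Z$ contributes nothing essential --- most likely $Z=\emptyset$ in an atom, or $Z$ splits off via a clique cutset, contradicting the atom assumption. The remaining vertices $Q\cup A\cup B\cup C\cup F$ sit in a bounded-diameter neighbourhood of the $C_5$, and the structural properties say the adjacencies between these sets are highly constrained: each $A_i$ is a disjoint union of cliques (\ref{item:1}), consecutive $A_i$'s are anti-complete while $A_i,A_{i+2}$ are complete (\ref{item:2},\ref{item:3}), $B$ lives in at most two opposite classes (\ref{item:5}), each $F_i$ has at most one vertex with $F$ stable (\ref{item:13}), and $C_{i,i+2}$ is stable (\ref{item:8}). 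The plan is to build a colouring greedily or by grouping: assign colour classes that combine a stable piece of the $C_5$ with compatible vertices of $A,B,C,F$, and argue that the ``overhead'' beyond $\omega(G)$ never exceeds $3$. The constant $3$ (rather than $1$ or $2$) is forced by the extremal example, the complement of the Schl\"afli graph, so the counting must be tight enough to match it.

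The main obstacle, and where I expect the real work to lie, is the interaction of the sets $C$ and $F$ with the large clique structure coming from $A$ and the $C_5$. Properties \ref{item:9}--\ref{item:12} and \ref{item:14}--\ref{item:18} show these vertices have only sparse, at-most-one-neighbour-per-component attachments, which suggests they should be colourable with a bounded number of extra colours independent of $\omega$; but verifying that the \emph{simultaneous} constraints from all five $C$-classes and $F$-classes can be satisfied with total overhead exactly $3$ is delicate. This is precisely the point where I anticipate the argument must leave pure structure and invoke the announced computer search: after reducing to the dense core, one identifies the maximal cliques, shows $\omega(G)$ is attained inside $A\cup Q$, and handles the bounded ``remainder'' of $C\cup F$ (whose size is bounded independently of $\omega$, being essentially a $(P_6,K_3)$-free configuration) using \autoref{thm:P6C3} and the Gr\"otzsch-graph characterization together with a finite case check. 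Concretely, I would try to show that $G$ restricted to the sparse part is $(P_6,K_3)$-free (since the $C,F,Z$ vertices have weak interconnections), so it is either $3$-colourable or contains the Gr\"otzsch graph, and rule the latter out by $P_6$-freeness; combining these three colours with an $\omega(G)$-colouring of the clique-dense part would yield the bound $\omega(G)+3$.
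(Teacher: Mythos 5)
Your skeleton (clique-cutset reduction to atoms, disposing of perfect graphs via the Strong Perfect Graph Theorem, then partitioning around an induced $C_5$ and exploiting properties \ref{item:1}--\ref{item:21}) matches the paper, but two of your steps would fail as written. First, the set $Z$: you guess that $Z=\emptyset$ in an atom or that it splits off via a clique cutset. Neither is forced --- a component of $Z$ can attach to several vertices of $F$ and to vertices of $C$, so no clique cutset arises. The paper instead proves (\autoref{lem:colournonneighbours}) that each component $K$ of $Z$ is $(\omega-1)$-colourable: if $K$ meets the neighbourhood of some $c\in C_{i,i+2}$ then $c$ is complete to $K$ and $K$ is a clique of size at most $\omega-1$; otherwise a layered argument (using both the atom hypothesis and the absence of comparable vertices) shows $K=L_1\cup L_2$ with $L_2$ stable and each component of $L_1$ of size at most $2$, so $K$ is $3$-colourable. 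Since $Z$ is anti-complete to $A\cup B$ by \ref{item:21}, these colours are recycled from the $Q\cup A\cup B$ palette. This also needs the base case $\omega\le 3$ handled separately (the paper cites the $6$-colourability of ($P_6$, diamond, $K_4$)-free graphs) so that $\omega-1\ge 3$.

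Second, and more seriously, your plan for $C\cup F$ is to show it is triangle-free and then \emph{rule out} the Gr\"otzsch graph by $P_6$-freeness, concluding that all of $C\cup F$ is $3$-colourable. That step is false: the Gr\"otzsch graph \emph{does} embed into $C\cup F$ compatibly with the $C_5$ --- the paper's computer search finds exactly $20$ valid labellings --- so $C\cup F$ need not be $3$-colourable, and spending fresh colours on all of it would give $\omega+4$. The essential idea your proposal is missing is colour reuse: \autoref{lem:AB} shows, by a case analysis on which $A_i$ are stable or empty, that for some $i$ the set $Q\cup A\cup B\cup C_{i,i+2}$ admits an $\omega$-colouring with $C_{i,i+2}$ monochromatic; and the computer search shows that every embedded Gr\"otzsch graph must use a vertex from each of the five classes $C_{j,j+2}$, so $(C\cup F)\setminus C_{i,i+2}$ is Gr\"otzsch-free and hence $3$-colourable by \autoref{thm:P6C3}. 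Without absorbing one $C_{i,i+2}$ into the $\omega$-colouring of the clique-dense part, the count does not close to $\omega+3$.
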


\begin{proof}
We prove this by induction on $|V(G)|$.
If $G$ is disconnected, then we are done by applying the inductive hypothesis to each connected component of $G$.
If $G$ contains a clique cutset $S$ such that $G-S$ is the disjoint union of two subgraphs $H_1$ and $H_2$, then
it follows from the inductive hypothesis that $\chi(G)=\max\{\chi(G[V(H_1)\cup S]),\chi(G[V(H_2)\cup S])\}\le \omega(G)+3$. 
If $G$ contains two non-adjacent vertices $u$ and $v$ such that $N(v)\subseteq N(u)$, then $\chi(G)=\chi(G-v)$ and 
$\omega(G)=\omega(G-v)$, and we are done by applying the inductive hypothesis to $G-v$.
Therefore, we can assume that $G$ is a connected atom with no pair of comparable vertices.
If $G$ is perfect, then $\chi(G)=\omega(G)$ by \autoref{thm:SPGT}.
Otherwise the theorem follows from \autoref{thm:imperfect} below.
\end{proof}

\begin{theorem}\label{thm:imperfect}
Let $G$ be a connected atom with no pair of comparable vertices. If $G$ is ($P_6$, diamond)-free and imperfect, then $\chi(G)\le \omega(G)+3$.
\end{theorem}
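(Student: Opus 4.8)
The plan is to build directly on the decomposition already in place. Since $G$ is imperfect, \autoref{thm:SPGT} gives an odd hole or odd antihole; but a $P_6$-free graph has no odd hole of length at least $7$ (six consecutive vertices of such a hole induce a $P_6$) and a diamond-free graph has no odd antihole of length at least $7$ (one checks that $\overline{C_7}$ already contains a diamond), so $G$ must contain an induced $C_5$. I would fix $Q=\{v_1,\dots,v_5\}$ and adopt the partition $V(G)=Q\cup A\cup B\cup C\cup F\cup Z$ together with properties \ref{item:1}--\ref{item:21}. The first task is to prune this decomposition using the three standing hypotheses. A non-empty $Z$ is anti-complete to $A\cup B$ by \ref{item:21} and reaches $Q$ only through $C\cup F$; I would argue that this forces either a clique cutset separating $Z$ from $Q$ or a comparable pair, so that $Z=\emptyset$. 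Likewise, using \ref{item:19} I would first dispose of the cases in which some $A_i$ is non-stable, since then $A_{i+2}=A_{i+3}=B_{i+1,i+2}=B_{i-1,i-2}=\emptyset$ collapses $G$ to a much sparser graph that can be coloured separately; this lets me assume each $A_i$ is a stable set.

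With these reductions, the essential observation is that the completeness relation \ref{item:3} makes the five stable sets $A_1,\dots,A_5$ into a blow-up of $C_5$ along the ``pentagon'' $A_1A_3A_5A_2A_4$, while \ref{item:2} shows consecutive sectors are anti-complete; consequently $Q\cup A$ is triangle-free and $3$-colourable. The clique number of $G$ is therefore created by the cliques $B_{i,i+1}$ (\ref{item:4}) together with their completeness to $A_{i-1}\cup A_{i+2}$ (\ref{item:7}), augmented by the attachments in $C$ and $F$. The colouring strategy is to spend $\omega(G)$ colours on the clique-bearing part and only $3$ further colours on the imperfect skeleton. Concretely, I would first read off $\omega(G)$ from the local cliques — by \ref{item:1}, \ref{item:4} and \ref{item:7} a maximum clique lies in a bounded union of a single $B_{i,i+1}$ with the stable sets complete to it (plus at most one vertex of $C$ or $F$, by \ref{item:10}, \ref{item:12} and \ref{item:16}) — and then colour sector by sector so that the three colours carried by the odd cycle $Q$ are reused across $A$, while each clique $B_{i,i+1}$ consumes only colours paid for by $\omega(G)$.

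The delicate part, and the main obstacle, is the coupling between this $\omega(G)$-colouring of the dense part and the surplus colours needed for the stable attachments $C$ and $F$. The sets $C_{i,i+2}$ are stable by \ref{item:8} and $F$ is stable with at most five vertices by \ref{item:13}, but their neighbourhoods into $A\cup B$ (governed by \ref{item:9}--\ref{item:18}) are intricate, and in the extremal configuration — the complement of the Schl\"afli graph — they genuinely cannot all reuse existing colours, which is exactly why the bound cannot be pushed below $\omega(G)+3$. To control this, I would isolate the induced subgraph carrying the residual imperfection (built from $C\cup F$ together with the relevant cross-sections of $A$), observe that it is $K_3$-free by the diamond constraints and $P_6$-free as an induced subgraph of $G$, and apply \autoref{thm:P6C3}: such a graph is $3$-colourable unless it contains the Gr\"otzsch graph of \autoref{fig:Grotzsch}. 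A finite computer search over the configurations that survive the structural reductions then certifies that they avoid the Gr\"otzsch graph, hence are $3$-colourable, so that at most $3$ colours beyond $\omega(G)$ are ever required. I expect the bulk of the work — the only part not reducible to the clean structural lemmas — to be verifying that these finitely many residual configurations are correctly enumerated and that the three surplus colours can always be chosen consistently with the $\omega(G)$-colouring of $Q\cup A\cup B$.
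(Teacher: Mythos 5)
Your overall architecture --- fix a $C_5$, use the partition $Q\cup A\cup B\cup C\cup F\cup Z$ with properties \ref{item:1}--\ref{item:21}, and reduce the leftover triangle-free part to \autoref{thm:P6C3} plus a finite computer search --- is the paper's, but two of your reductions do not go through. First, $Z=\emptyset$ is not forced by the hypotheses: a component $K$ of $Z$ may attach to two non-adjacent vertices lying in different sets $C_{i,i+2}$, in which case its neighbourhood is not a clique, no clique cutset separates $K$ from $Q$, and no comparable pair need arise. The paper does not eliminate $Z$; it proves (\autoref{lem:colournonneighbours}) that every component of $Z$ is $(\omega-1)$-colourable, distinguishing components with a neighbour in $C$ (which are cliques of size at most $\omega-1$) from those attached only to $F$ (which split into layers $L_1,L_2$ with $L_2$ stable and each component of $L_1$ of size at most $2$); the atom and no-comparable-pair hypotheses are used there locally, not to delete $Z$. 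Similarly, ``each $A_i$ is stable'' is not a harmless normalization: the configurations with a non-stable $A_i$ are exactly where cliques of size up to $\omega-1$ sit inside $A$, and they must be coloured jointly with $Q\cup B$ and with one chosen $C_{i,i+2}$, which is why \autoref{lem:AB} is a multi-case analysis rather than a reduction.

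The more serious gap is in the last step. You assert that the residual triangle-free configurations ``avoid the Gr\"otzsch graph,'' but they do not: the computer search returns $20$ valid embeddings of the Gr\"otzsch graph into $C\cup F$, so $C\cup F$ need not be $3$-colourable and three surplus colours alone cannot finish the proof. The actual mechanism is that every such embedding uses a vertex from each of the five sets $C_{i,i+2}$; hence $(C\cup F)\setminus C_{i,i+2}$ is Gr\"otzsch-free and therefore $3$-colourable by \autoref{thm:P6C3} (\autoref{lem:CFminusCi}), and the excised $C_{i,i+2}$ must be absorbed into the first $\omega$ colours. \autoref{lem:AB} is precisely the statement that for some $i$ there is an $\omega$-colouring of $Q\cup A\cup B$ in which $C_{i,i+2}$ is monochromatic; without this reuse of one old colour on an entire $C_{i,i+2}$, the count $\omega+3$ does not close. (You also need $\omega\ge 4$ throughout --- several of the explicit colourings assume it --- with the case $\omega\le 3$ delegated to the known result that $(P_6,\text{diamond},K_4)$-free graphs are $6$-colourable.)
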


The remainder of the section is devoted to the proof of \autoref{thm:imperfect}.
We begin with a simple lemma that will be useful later. 
A {\em matching} in a graph is a set of edges such that no two edges in the set meet a common vertex.

\begin{lemma}\label{lem:two cliques}
Let $G$ be a graph that can be partitioned into two cliques $X$ and $Y$ such that the edges between $X$ and $Y$ form a matching.
If $\max\{|X|,|Y|\}\le k$ for some integer $k\ge 2$, then $G$ is $k$-colourable.
\end{lemma}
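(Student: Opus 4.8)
The plan is to colour the two cliques one after the other, extending a colouring of $X$ to $Y$ by means of Hall's theorem. First I would observe that since $|X|\le k$, I can give the vertices of $X$ pairwise distinct colours $c(x)\in\{1,\dots,k\}$, which is a proper colouring of the clique $X$. It then remains to colour $Y$ so that (i) the vertices of $Y$ receive pairwise distinct colours (as $Y$ is a clique) and (ii) for each matching edge $xy$ between $X$ and $Y$, the colour of $y$ differs from $c(x)$. Since every non-matching pair across $X$ and $Y$ is non-adjacent, conditions (i) and (ii) together guarantee a proper colouring of all of $G$.

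Next I would phrase the colouring of $Y$ as a system-of-distinct-representatives problem. To each $y\in Y$ I assign a list $L_y\subseteq\{1,\dots,k\}$: if $y$ is matched to some $x\in X$ I set $L_y=\{1,\dots,k\}\setminus\{c(x)\}$, and otherwise $L_y=\{1,\dots,k\}$. Because each $y$ is incident to at most one matching edge, $|L_y|\ge k-1\ge 1$. A choice of pairwise distinct colours for the vertices of $Y$, one out of each list, is precisely a system of distinct representatives for the family $(L_y)_{y\in Y}$, and any such system yields the desired proper colouring.

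The heart of the argument is verifying Hall's condition for this family, and this is where the matching hypothesis does the work. The key point is that the forbidden colours are all distinct: if $y$ is matched to $x$, its unique forbidden colour is $c(x)$, and since the edges between $X$ and $Y$ form a matching, distinct matched vertices of $Y$ correspond to distinct vertices of $X$, which carry distinct colours. Hence no colour is forbidden by two different vertices of $Y$. Consequently, for any $T\subseteq Y$ with $|T|\ge 2$ the union $\bigcup_{y\in T}L_y$ is all of $\{1,\dots,k\}$ (no colour can be absent from every list of $T$), so $|\bigcup_{y\in T}L_y|=k\ge|Y|\ge|T|$; and for $|T|=1$ we have $|L_y|\ge k-1\ge 1=|T|$ using $k\ge 2$. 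Thus Hall's condition holds, a system of distinct representatives exists, and $G$ is $k$-colourable.

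The main obstacle, and really the only substantive step, is establishing Hall's condition, which hinges entirely on the observation that the matching structure makes the forbidden colours injective in the matched vertices of $Y$. The roles of the two hypotheses are then transparent: $|X|\le k$ is used to colour $X$ with distinct colours, while $|Y|\le k$ is exactly what makes the bound $k\ge|T|$ suffice, so the assumption $\max\{|X|,|Y|\}\le k$ is used in full.
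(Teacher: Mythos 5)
Your proof is correct, but it takes a genuinely different route from the paper's. The paper's argument is non-constructive and very short: it notes that any clique of $G$ meeting both $X$ and $Y$ has at most two vertices (because the crossing edges form a matching), so $\omega(G)\le\max\{|X|,|Y|,2\}\le k$, and that $G$ is perfect (it has no odd hole or odd antihole, being the complement of a bipartite graph; the paper invokes \autoref{thm:SPGT}), whence $\chi(G)=\omega(G)\le k$. You instead give a direct, constructive argument: colour the clique $X$ injectively, then colour $Y$ by finding a system of distinct representatives for the lists $L_y$, verifying Hall's condition via the key observation that the matching structure makes the forbidden colours pairwise distinct across $Y$, so that any union of two or more lists is all of $\{1,\dots,k\}$. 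Your verification is sound, including the edge cases $|T|=1$ (where $k\ge 2$ is used) and the fact that non-matching cross pairs are non-adjacent so the two conditions you impose suffice for propriety. What each approach buys: the paper's proof is two lines but leans on the Strong Perfect Graph Theorem (a heavy tool, though already available in the paper); yours is elementary, self-contained, and immediately algorithmic, which fits well with the remark in \autoref{sec:conclude} that the whole proof can be turned into a polynomial-time colouring algorithm.
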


\begin{proof}
Note that either $X$ or $Y$ is a maximum clique of $G$ unless $X$ and $Y$ are singletons in which case
the maximum size of a clique of $G$ is at most 2. Moreover, $G$ is perfect by \autoref{thm:SPGT}.
Since $\max\{|X|,|Y|\}\le k$ and $2\le k$, it follows that $G$ is $k$-colourable. 
\end{proof}

\begin{proof}[Proof of \autoref{thm:imperfect}]
Let $G = (V, E)$ be a graph satisfying the assumptions of the theorem.
Since $G$ is imperfect, it contains an induced $C_5$, say $Q=\{v_1,v_2,v_3,v_4,v_5\}$ (in order).
We partition $V(G)\setminus Q$ as in \autoref{sec:imperfect}.
Let $\omega:=\omega(G)$.
If $\omega\le 3$, then the theorem follows from a known result that every ($P_6$, diamond)-free graph without a $K_4$ is 6-colourable \cite{KM18}.
Therefore, we can assume that $\omega\ge 4$.
The idea is to colour $Q\cup A\cup B$, $C\cup F$, and $Z$ independently using
as few colours as possible. However, to obtain the optimal bound, we need to reuse colours in some smart way.
In particular, we show that we can reuse one colour from $Q\cup A\cup B$ on some $C_{i,i+2}$, so that
the remaining of $C\cup F$ can be coloured with only 3 colours (\autoref{lem:AB} and \autoref{lem:CFminusCi}). 
The proof of \autoref{lem:CFminusCi} relies on a computer search combined
with a well-known result characterizing 3-colourable $(P_6,K_3)$-free graphs (\autoref{thm:P6C3}).
See \autoref{fig:colouring} for a diagram illustrating our colouring of $G$ with $\omega+3$ colours.

\begin{figure}[h!]
\centering
\begin{tikzpicture}[scale=0.8]

\tikzstyle{vertex}=[draw, circle, fill=black!15]
\tikzstyle{set}=[draw,black!100,rounded corners]

\node[set] (C5AB) at (-3,0) {$Q\cup A\cup B$};
\node[set] (CF) at (-3,-5) {$(C\cup F)\setminus C_{i,i+2}$};
\node[set] (N) at (3,-5) {$Z$};
\node[set] (Ci) at (3,0) {$C_{i,i+2}$};

\draw (C5AB)--(CF)--(N);
\draw (C5AB)--(Ci)--(N);
\draw (Ci)--(CF);
\draw[dashed] (N)--(C5AB);

\node at (-3,-6) {\autoref{lem:CFminusCi}:  $\omega+1,\omega+2,\omega+3$};
\node at (3,-6) {\autoref{lem:colournonneighbours}:  $1,2,\ldots,\omega-1$};
\node at (3,1) {$\omega$};
\node at (-3,1) {$1,2,\ldots,\omega$};

\node at (-6,1) {\autoref{lem:AB}:};
\end{tikzpicture}
\caption{A $(\omega+3)$-colouring of $G$. A solid line means that the edges between the two sets are arbitrary and a dashed line means that
the two sets are anti-complete. }\label{fig:colouring}
\end{figure}
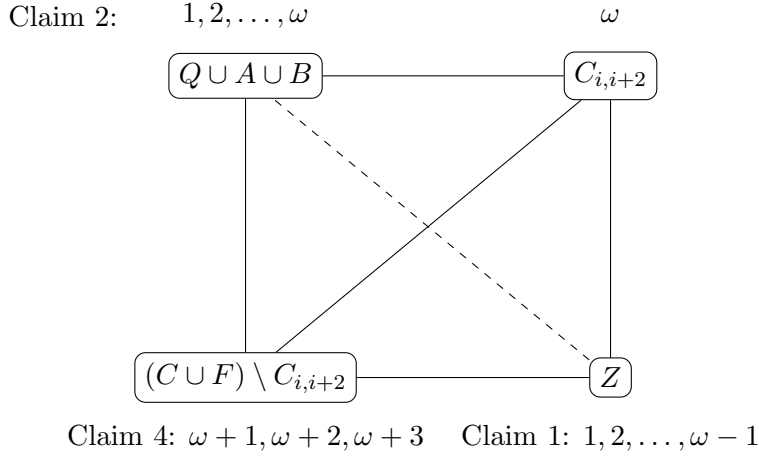

We first deal with the components of $Z$.

\begin{claim}\label{lem:colournonneighbours}
Each component of $Z$ is $(\omega-1)$-colourable.
\end{claim}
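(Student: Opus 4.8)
The plan is to show that each component $D$ of $Z$ is perfect and satisfies $\omega(D)\le\omega-1$; since a perfect graph needs only $\omega(D)$ colours, this immediately gives $\chi(D)\le\omega-1$. The starting observation is that by property~\ref{item:21} the set $Z$ is anti-complete to $A\cup B$, and $Z$ is anti-complete to $Q$ by definition, so since $G$ is connected every component $D$ of $Z$ must have a neighbour in $C\cup F$. The two ways in which $D$ can attach to the rest of $G$ — through $C$ or through $F$ — behave very differently, and I would handle them separately.

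First I would dispose of the case where some $c\in C_{i,i+2}$ has a neighbour $z_0\in D$. Here a single $P_6$ does the work: if $z'$ is a neighbour of $z_0$ in $D$ with $cz'\notin E$, then $\{v_{i+3},v_{i+4},v_i,c,z_0,z'\}$ induces a $P_6$. Iterating this along the connected graph $D$ forces $c$ to be complete to all of $D$. Diamond-freeness then forbids an induced $P_3$ in $D$ (a $P_3$ together with $c$ would be a diamond), so the connected graph $D$ is a clique; as $D\cup\{c\}$ is then a clique it has size at most $\omega$, giving $\omega(D)=|D|\le\omega-1$ and hence $\chi(D)=|D|\le\omega-1$. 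Such a $D$ is trivially perfect.

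The remaining, genuinely harder, case is when every neighbour of $D$ in $C\cup F$ lies in $F$. The difficulty is that, unlike a $C$-vertex, a vertex $f\in F_i$ adjacent to $z_0$ need not be adjacent to the neighbours of $z_0$ in $D$: because $f$ already sees $v_i,v_{i+2},v_{i+3}$, no induced $P_6$ can be routed from $f$ around $Q$, so adjacency does not propagate and $D$ need not be a clique. Instead I would use $P_6$-freeness as a distance bound: since $f$ is non-adjacent to $v_{i+1}$, the set $\{v_{i+1},v_i,f,z_0,z_1,z_2\}$ induces a $P_6$ whenever $f\,z_0\,z_1\,z_2$ is an induced path in $D\cup\{f\}$, so $f$ can reach nothing in $D$ by an induced path with two or more interior $Z$-vertices. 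Combining this with the diamond argument that, for any clique $K$ of $D$, each $f\in F$ is either complete to $K$ or has at most one neighbour in $K$, I would locate a single $f\in F$ complete to a maximum clique $K$ of $D$; then $K\cup\{f\}$ is a clique, so $\omega(D)\le\omega-1$. A parallel analysis rules out an induced $C_5$ in $D$: if $R=\{z_1,\dots,z_5\}\subseteq D$ induced a $C_5$, then inspecting the possible adjacency patterns of a nearby $f\in F_i$ to $R$ (at most one vertex, an adjacent pair, or a non-adjacent pair) each produces an induced $P_6$ — for example $\{v_{i+1},v_i,f,z_1,z_5,z_4\}$ when $f$ meets the non-adjacent pair $\{z_1,z_3\}$ — a contradiction. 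With no induced $C_5$, \autoref{thm:SPGT} makes $D$ perfect, and with $\omega(D)\le\omega-1$ we again get $\chi(D)\le\omega-1$.

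The main obstacle I anticipate is exactly this $F$-only case: because adjacency from $F$ does not propagate, one must argue through the internal structure of $D$, using $P_6$-freeness to control distances from $F$ into $D$ and diamond-freeness to upgrade ``meets the clique'' to ``complete to the clique''. Pinning down a single fixed $f$ that is complete to a maximum clique $K$ (rather than several vertices of $F$ each missing part of $K$) is the delicate point, and I expect to need the hypotheses that $G$ is an atom with no pair of comparable vertices in order to exclude the degenerate configurations in which the neighbourhood of $K$ is not captured by one vertex of $F$.
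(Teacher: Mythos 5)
Your first case (some $c\in C_{i,i+2}$ has a neighbour in the component) is correct and is exactly the paper's argument: the $P_6$ forces $c$ to be complete to the component, diamond-freeness makes the component a clique, and $|D|\le\omega-1$ follows. The divergence, and the gap, is in the $F$-only case. The paper does \emph{not} prove that the component is perfect; it proves it is $3$-colourable. After establishing the same distance bound you describe ($K=L_1\cup L_2$ with $L_1=N(f)\cap K$), the paper uses the no-clique-cutset hypothesis to show $L_2$ is stable and the no-comparable-vertices hypothesis to show each component of $L_1$ has at most two vertices; this yields $\chi(K)\le 3\le\omega-1$ since $\omega\ge 4$. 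Your route instead requires two claims that you assert but do not establish: that some single $f\in F$ is complete to a maximum clique of $D$, and that $D$ contains no induced $C_5$.

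The second of these is where your sketch concretely breaks. Your case analysis of how a vertex $f\in F_i$ can meet an induced $C_5$ $R=z_1z_2z_3z_4z_5$ ("at most one vertex, an adjacent pair, or a non-adjacent pair") omits two cases. First, $f$ may have exactly three neighbours on $R$ in the pattern $\{z_1,z_2,z_4\}$ (two consecutive plus one opposite); one can check that $Q\cup\{f\}\cup R$ with this attachment contains neither a diamond nor an induced $P_6$, so no local contradiction arises and you would have to reach outside this vertex set. Second, $R$ may lie entirely in $L_2$, i.e., $f$ has \emph{no} neighbour on $R$, and then the argument does not even start; you would have to chase the $F$-neighbours of the individual $z_j$'s, whose existence itself needs the comparable-vertices hypothesis. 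Note also that the structure the paper proves ($L_2$ stable, $L_1$-components of size at most $2$) still \emph{permits} a $C_5$ with two adjacent vertices in one $L_1$-component, one vertex in another, and two vertices in $L_2$, so perfection of $D$ is not a consequence of the paper's analysis and would need a genuinely new argument. (Your implicit use of \autoref{thm:SPGT} is otherwise fine: in a $(P_6,\text{diamond})$-free graph the only possible odd hole or odd antihole is $C_5$, though this deserves a sentence.) The robust fix is to abandon perfection and instead prove the two structural claims the paper proves, which give $\chi(K)\le 3$ directly.
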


\begin{proof}
Let $K$ be an arbitrary component of $Z$. 
Suppose first that $K$ has a neighbour $c$ in $C_{i,i+2}$ for some $i$.
If $c$ is adjacent to $k_1\in K$ but not to $k_2\in K$ with $k_1k_2\in E$, then
$\{k_2,k_1,c,v_{i+2},v_{i+3},v_{i+4}\}$ induces a $P_6$. So, $c$ is complete to $K$.
Since $G$ is diamond-free, $K$ is a clique of size at most $\omega-1$. Thus, $K$ is $(\omega-1)$-colourable.

Suppose now that $K$ has no neighbour in $C$.  It follows from \ref{item:21} that  $K$ is anti-complete to $A\cup B$.
Since $G$ is connected, $K$ has a neighbour $f\in F$.
Let $L_1:=N(f)\cap K$ and $L_{i+1}:=N(L_i)\cap K$ for $i\ge 1$. 
Since $K$ is connected, $K=\bigcup_{i\ge 1}L_i$.
Suppose that $L_i$ contains a vertex $s_i$ for some $i\ge 3$. 
By definition, there is an induced path $f,s_1,\ldots,s_i$ such that $s_j\in L_j$ for each $1\le j\le i$.
Then for some $1\le k\le 5$, we have that $v_{k+1},v_{k}, f,s_1,\ldots,s_i$ contains an induced $P_6$, where $v_kf\in E$ and $v_{k+1}f\notin E$. 
This shows that $K=L_1\cup L_2$.
Since $G$ is diamond-free, each component of $L_1$ is $P_3$-free and thus is a clique.
Since $G$ is $P_6$-free, each vertex in $L_1$ is either complete or anti-complete to each component of $L_2$.
Moreover, if a component $X$ of $L_2$ has two neighbours in a component of $L_1$,
then two such neighbours, a vertex in $X$ and $f$ induce a diamond.

First, we claim that $L_2$ is stable. Suppose not. Let $X$ be a component of $L_2$ with at least two vertices $x$ and $x'$.
By definition, $X$ has a neighbour $y$ in some component $Y$ of $L_1$. If $X$ has a neighbour $y'\in L_1$ with $y'\neq y$,
then $y'$ is in a component other than $Y$, and so $\{y,y',x,x'\}$ induces a diamond. So, $y$ is the only neighbour of $X$ in $L_1$.
Since $G$ has no clique cutset, $X$ has a neighbour $f'\in F$. Note that there is an induced path $f,v_i,v_j,f'$ where $v_i,v_j\in Q$.
If $f'$ distinguishes an edge $xx'$ in $X$, then $f,v_i,v_j,f',x',x$ induces a $P_6$.
So, $f'$ is complete to $X$. If $f'y\notin E$, then $\{f',y,x,x'\}$ induces a diamond.
So, $f'y\in E$.  Note that the above argument works for each neighbour of $X$ in $F$.
Hence, if there is another neighbour $f''\in F$ of $X$, then $f'f''\notin E$ by \ref{item:13} and so $\{f',f'',x,x'\}$ induces a diamond.
This shows that $\{f',y\}$ is a clique cutset that separates $X$ from $G$, a contradiction.
This proves that $L_2$ is stable.

Secondly, we claim that if $L_2\neq \emptyset$, then each component of $L_1$ has size at most 2.
Let $x\in L_2$. If $x$ has no neighbour in $F$, then $N(x)\subseteq L_1\subseteq N(f)$.
This contradicts the assumption that $G$ has no pair of comparable vertices. So $x$ has a neighbour $f'\in F$.
Note that there is an induced path $f,v_i,v_j,f'$ where $v_i,v_j\in Q$.
For each non-neighbour $y\in L_1$ of $x$, we have that $\{y,f,v_i,v_j,f',x\}$ induces a $P_6$  unless $yf'\in E$.
This shows that $f'$ is adjacent to each non-neighbour of $x$ in $L_1$.
Since $x$ has at most one neighbour in each component of $L_1$, if a component of $L_1$ has size at least 3,
then $f'$ is adjacent to at least two vertices in that component, and so these two vertices, $f$ and $f'$ induce a diamond.
This proves the claim.

We now complete the proof using the above two claims.
If $L_2=\emptyset$, then $K=L_1$ is a clique of  size at most $\omega-1$,
and so is $(\omega-1)$-colourable. If $L_2\neq \emptyset$, then $K$ is 3-colourable by the two claims. 
Since $\omega\ge 4$, it follows that $K$ is $(\omega-1)$-colourable.
\end{proof}

Next we deal with $Q\cup A\cup B\cup C_{i,i+2}$ for some $i$.

\begin{claim}\label{lem:AB}
There exists an index $1\le i\le 5$ such that $Q\cup A\cup B\cup C_{i,i+2}$ can be coloured with $\omega$ colours
such that $C_{i,i+2}$ is monochromatic.
\end{claim}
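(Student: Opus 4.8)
Since each $C_{i,i+2}$ is a stable set by~\ref{item:8}, colouring $Q\cup A\cup B\cup C_{i,i+2}$ with $C_{i,i+2}$ monochromatic is the same as producing a proper $\omega$-colouring of $G[Q\cup A\cup B]$ that has a colour class $S$ (a stable set) anti-complete to $C_{i,i+2}$: we may then recolour every vertex of $C_{i,i+2}$ with the colour of $S$, as $S\cup C_{i,i+2}$ is still stable. Thus the plan is to find an index $i$, a proper $\omega$-colouring of $G[Q\cup A\cup B]$, and a colour class of it avoiding $N(C_{i,i+2})\cap(Q\cup A\cup B)$. If some $C_{i,i+2}$ is empty the second requirement is vacuous, so the genuine content is the colouring bound together with the free-colour requirement.

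\textbf{An $\omega$-colouring of $Q\cup A\cup B$.} First I would show $\chi(G[Q\cup A\cup B])\le\max\{\omega(G[Q\cup A\cup B]),3\}$, which suffices because $\omega\ge 4$ and $\omega(G[Q\cup A\cup B])\le\omega$. The key is~\ref{item:19}. If every $A_i$ is stable, then each $A_i$ is a single vertex or empty, and by~\ref{item:2} and~\ref{item:3} the graph $G[Q\cup A]$ is an induced subgraph of the Petersen graph (outer cycle $Q$, inner pentagram on $A$), hence $3$-colourable; the at most two opposite cliques of $B$ (\ref{item:4},\ref{item:5}), each complete only to $v_i,v_{i+1}$ and to the singletons in $A_{i-1},A_{i+2}$ (\ref{item:6},\ref{item:7}), are then inserted by reusing colours, staying within $\max\{|B_{i,i+1}|+2,3\}$ colours. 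Otherwise some $A_i$, say $A_1$, contains an edge; by~\ref{item:19} we get $A_3=A_4=B_{2,3}=B_{4,5}=\emptyset$, and applying~\ref{item:19} again shows at most one of $A_2,A_5$ is non-stable and then the other is empty. Hence $A$ is the disjoint union of the cluster graph $A_1$ and a clique of size at most $2$ on $A_2\cup A_5$ (by~\ref{item:3}), so $G[A]$ contains no $C_5$ and is perfect; the few surviving $B$-cliques are added as before. In either regime the only odd hole is the single pentagon $Q$, so no unbounded $C_5$-overhead arises and an explicit colouring with at most $\max\{\omega(G[Q\cup A\cup B]),3\}\le\omega$ colours is produced.

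\textbf{Freeing a colour for some $C_{i,i+2}$.} Suppose all $C_{i,i+2}$ are non-empty. Note that $C_{i,i+2}$ is anti-complete to $v_{i+1},v_{i+3},v_{i+4}$, that $\{v_{i+1},v_{i+4}\}$ is a stable set at distance $2$, and that the $3$-colouring of $Q$ can be chosen so that $\{v_{i+1},v_{i+4}\}$ is one of its classes. For each vertex of $C_{i,i+2}$,~\ref{item:12} confines its $B$-neighbours to one vertex of $B_{i+3,i+4}$, while~\ref{item:9}--\ref{item:11} confine its $A$-neighbours to at most one vertex per component of $A_{i+1},A_{i+3},A_{i+4}$ (and none in a non-trivial component of $A_{i+1}$); moreover the join structure~\ref{item:3} together with diamond-freeness sharply limits how many $A$-vertices a single vertex of $C_{i,i+2}$ can see at once, so $C_{i,i+2}$ cannot meet all of the relevant $A$-classes. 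Using these restrictions I would build the colouring of the previous step so that the colour class $S$ containing $v_{i+1}$ and $v_{i+4}$ consists only of vertices anti-complete to $C_{i,i+2}$, choosing the index $i$ to make this possible. Recolouring $C_{i,i+2}$ with the colour of $S$ then gives the desired $\omega$-colouring with $C_{i,i+2}$ monochromatic.

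\textbf{Main obstacle.} The colouring bound of the second step is comparatively routine once~\ref{item:19} has ruled out the large pentagram blow-ups that would otherwise push $\chi$ far above $\omega(G[Q\cup A\cup B])$. The real difficulty is the third step: maintaining an $\omega$-colouring of $Q\cup A\cup B$ while guaranteeing a colour class anti-complete to some $C_{i,i+2}$, since a single vertex of $C_{i,i+2}$ may still scatter neighbours across $A_{i+1},A_{i+3},A_{i+4}$ and into $B_{i+3,i+4}$. I expect to resolve this by a careful case analysis that selects the index $i$ using the diamond-free restrictions~\ref{item:9}--\ref{item:12}, choosing (when several $C$-classes are non-empty) the class whose neighbourhood is most constrained, so that the colour anchored at $\{v_{i+1},v_{i+4}\}$ remains free.
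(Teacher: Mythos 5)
Your reduction in the first step is sound, and your reading of \ref{item:19} correctly identifies the dichotomy (some $A_i$ non-stable versus all $A_i$ stable) that organizes the paper's own case analysis. But the proposal has a genuine gap: the entire content of the claim lives in your third step, and you explicitly defer it (``I expect to resolve this by a careful case analysis''). The paper's proof \emph{is} that case analysis --- four explicit colourings (for $A_1$ non-stable; $A_1$ stable non-empty; $A_1$ empty with $A_2$ non-stable; all $A_i$ stable), each verified against properties \ref{item:1}--\ref{item:21} and \autoref{lem:two cliques}. Moreover, your plan for freeing a colour anchors the spare colour class at $\{v_{i+1},v_{i+4}\}$ and hopes to keep it disjoint from $N(C_{i,i+2})$; you give no mechanism for doing this, and a vertex of $C_{i,i+2}$ genuinely can have neighbours scattered through $A_{i+1}$, $A_{i+3}$, $A_{i+4}$ and $B_{i+3,i+4}$. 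The paper sidesteps this with a trick you do not have: it reserves the \emph{top} colour $\omega$ for $C_{i,i+2}$, observes that within $Q\cup A\cup B$ colour $\omega$ is only ever used by the last vertex of a component of $A_i$ or $A_{i+2}$ of size $\omega-1$, and then uses \ref{item:9} to argue that any such component with a neighbour $c\in C_{i,i+2}$ is complete to $c$, so the component together with $v_i$ and $c$ would be a clique of size $\omega+1$. Hence no vertex coloured $\omega$ can see $C_{i,i+2}$. Without this (or an equivalent) argument, your step 3 does not go through.

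There are also factual errors in your second step. If $A_i$ is stable it need not be a single vertex or empty: property \ref{item:1} only says each \emph{component} of $A_i$ is a clique, so a stable $A_i$ is a (possibly large) independent set, and $G[Q\cup A]$ is a blow-up of the Petersen-like graph, not an induced subgraph of it. Similarly, in the case where $A_1$ is non-stable, $A_2\cup A_5$ need not be ``a clique of size at most $2$'': if both are stable and non-empty, $G[A_2\cup A_5]$ is a complete bipartite graph of unbounded size (this creates $C_4$'s, not diamonds). The chromatic conclusions you draw ($3$-colourability of these pieces) happen to survive these errors, but the bound $\chi(G[Q\cup A\cup B])\le\max\{\omega,3\}$ is asserted rather than proved, and inserting the two $B$-cliques ``by reusing colours'' is exactly where the paper needs \autoref{lem:two cliques} and the cardinality restrictions \ref{item:19}--\ref{item:20}. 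In short: correct skeleton, but the load-bearing steps are missing.
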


\begin{proof}
 By \ref{item:5}, we may assume that $B=B_{2,3}\cup B_{4,5}$.
We consider several cases. In each case we give a desired colouring explicitly.
In the following, when we say that we colour a set with a certain colour, we mean that we colour each vertex in the set with that colour.
For convenience, we always colour $C_{i,i+2}$ with colour $\omega$ below.

\case{1} $A_1$ is not stable. 

By \ref{item:19}, we have that $A_3=A_4=B_{2,3}=B_{4,5}=\emptyset$.
Moreover, $A_1$ is anti-complete to $A_2\cup A_5$ by \ref{item:2}, and $A_2$ and $A_5$ are complete to each other by \ref{item:3}.
By \ref{item:19}, if $A_2$ is not stable, then $A_5$ is empty. This implies that
one of $A_2$ and $A_5$ is stable. 
By symmetry, we may assume that $A_5$ is stable.
We now colour $Q\cup A\cup B\cup C_{1,3}$ as follows.

$\bullet$ Colour $Q=v_1,v_2,v_3,v_4,v_5$ with colours 1, 2, 1, 2, 3 in order.

$\bullet$ Colour $A_5$ with colour 2.

$\bullet$ Colour each component of $A_1$ with colours in $\{2,3,\ldots,\omega\}$ using the smallest colour available.

$\bullet$ Colour each component of $A_2$ with colours in $\{1,3,\ldots,\omega\}$ using the smallest colour available.

$\bullet$ Colour $C_{1,3}$ with colour $\omega$.

We now show that this is a $\omega$-colouring of $Q\cup A\cup B\cup C_{1,3}$.
Observe first that each trivial component of $A_1$ is coloured with 2 and  each trivial component of $A_2$ is coloured with 1.
By \ref{item:1}, the colouring is proper on $Q\cup A\cup B$. It remains to show that
the colour of $C_{1,3}$ does not conflict with those colours of $A$. 
By \ref{item:11},  no vertex in $C_{1,3}$ can have a neighbour in a non-trivial component of $A_2$.
So, $C_{1,3}$ does not conflict with $A_2$. 
Suppose that there exists a vertex $a\in A_1$ with colour $\omega$ who has a neighbour  $c\in C_{1,3}$.
Let $K$ be the component of $A_1$ containing $a$. Then $c$ is complete to $K$ by \ref{item:9}.
This implies that $K\cup \{v_1,c\}$ is a clique and so $|K|\le \omega- 2$.
This contradicts that $a$ is coloured with colour $\omega$.
So,  $C_{1,3}$ does not conflict with $A_1$. This proves that the colouring is a proper colouring.

\medskip
\case{2} $A_1$ is stable but not empty.

By \ref{item:19}, we have that $A_3$ and $A_4$ are stable.
By \ref{item:20}, each of $B_{2,3}$ and $B_{4,5}$ contains at most one vertex.
Let $b_{2,3}$ and $b_{4,5}$ be the possible vertex in $B_{2,3}$ and $B_{4,5}$, respectively.
By \ref{item:19}, we have that one of $A_2$ and $A_5$ is stable. By symmetry, we may assume that
$A_5$ is stable. 
If $A_2$ is stable, then it is easy to verify that the following is a 3-colouring of $Q\cup A\cup B$:
$\{v_1,v_3\}\cup A_4\cup B_{4,5}$, $\{v_2,v_4\}\cup A_5\cup A_1$, $\{v_5\}\cup B_{2,3}\cup A_2\cup A_3$.
Since $\omega\ge 4$, one can extend this colouring to a $\omega$-colouring of $Q\cup A\cup B\cup C_{i,i+2}$
for any $i$.
We now assume that $A_2$ is not stable. By \ref{item:19}, we have that $A_4=A_5=\emptyset$.
We can colour $Q\cup A\cup B\cup C_{1,3}$ as follows.

$\bullet$ Colour $Q=v_1,v_2,v_3,v_4,v_5$ with colours 1, 2, 3, 1, 3 in order.

$\bullet$ Colour $A_1$ and $A_3$ with colours 3 and 1, respectively.

$\bullet$ Colour each component of $A_2$ with colours in $\{1,3,\ldots,\omega\}$ using the smallest colour available.

$\bullet$ Colour $b_{2,3}$ and $b_{4,5}$ with colours 1 and 2, respectively.

$\bullet$ Colour $C_{1,3}$ with colour $\omega$.

An argument similar to that in Case 1 shows that this is indeed an $\omega$-colouring of $Q\cup A\cup B\cup C_{1,3}$. 

\medskip
\case{3} $A_1$ is empty.
We further consider two subcases.

\medskip
\case{3.1} $A_2$ is not stable.

By \ref{item:19}, we have that $A_4=A_5=\emptyset$.
By \ref{item:20}, either $A_3$ is empty or $B_{4,5}$ has at most one vertex.

Suppose first that $A_3$ not empty. Then $B_{4,5}$ has at most one vertex. 
Let $b_{4,5}$ be the possible vertex in $B_{4,5}$.
Consider the following colouring of $Q\cup A\cup B\cup C_{1,3}$.

$\bullet$ Colour $Q=v_1,v_2,v_3,v_4,v_5$ with colours 1, 2, 3, 1, 3 in order.

$\bullet$ Colour each component of $A_2$ with colours in $\{1,3,\ldots,\omega\}$ 
using the smallest colour available.

$\bullet$ Colour each component of $A_3$ with colours in $\{1,2,4,\ldots,\omega\}$ 
using the smallest colour available.

$\bullet$ Colour $b_{4,5}$ with 2 if $b_{4,5}$ exists, and 
colour vertices in $B_{2,3}$ with colours in $\{1,4,\ldots,\omega\}$.

$\bullet$ Colour $C_{1,3}$ with colour $\omega$.

By \ref{item:4}, we have that $|B_{2,3}|\le \omega-2$.
By \ref{item:12}, we have that $C_{1,3}$ and $B_{2,3}$ are anti-complete.
By \ref{item:11}, we have that $C_{1,3}$ is anti-complete to each non-trivial component of $A_2$.
If $b_{4,5}$ exists, then $A_3$ is stable by \ref{item:20}. It then follows from the definition that
each vertex in $A_3$ is coloured with 1. One can easily verify that the above is a proper $\omega$-colouring of $Q\cup A\cup B\cup C_{1,3}$.
If $b_{4,5}$ does not exist, then an argument similar to that in Case 1 shows that
this is a proper $\omega$-colouring of $Q\cup A\cup B\cup C_{1,3}$.

Suppose now that $A_3$ is empty. 
Since $G$ is diamond-free, the edges between $B_{4,5}$ and $B_{2,3}$ form a matching.
For the same reason, the edges between $B_{4,5}$ and each component of $A_2$ form a matching.
Consider the following colouring of $Q\cup A\cup B\cup C_{1,3}$.

$\bullet$ Colour $Q=v_1,v_2,v_3,v_4,v_5$ with colours 3, $\omega$, 1, $\omega$, 1 in order.

$\bullet$ For each component $K$ of $A_2$, pick an arbitrary vertex $a_K$ in the component and colour it with 1.
By \autoref{lem:two cliques}, there exists a $(\omega-2)$-colouring of $B_{4,5}\cup (K\setminus a_K)$ using colours $2,3,\ldots,\omega-1$.

$\bullet$ By \autoref{lem:two cliques}, there exists a $(\omega-2)$-colouring of $B_{4,5}\cup B_{2,3}$ using colours $2,3,\ldots,\omega-1$.

$\bullet$ Colour $C_{1,3}$ with colour $\omega$.

Since $B_{2,3}$ and $A_2$ are anti-complete, the above colouring (by permuting colours in $A_2$) gives an $\omega$-colouring of $Q\cup A\cup B\cup C_{1,3}$.

\case{3.2} $A_2$ is stable. By symmetry, $A_5$ is stable.

Suppose first that $A_3$ is not stable. By \ref{item:19}, we have that $A_5=B_{4,5}=\emptyset$.
If $A_4$ is stable, one can easily verify that the following is an $\omega$-colouring of $Q\cup A\cup B\cup C_{2,4}$.

$\bullet$ Colour $Q=v_1,v_2,v_3,v_4,v_5$ with colours 1, 2, $\omega$, 3, 2 in order.

$\bullet$ Colour $A_2$ and $A_4$ with 1 and 2, respectively, and colour each component of $A_3$ with colours in $\{1,2,\ldots,\omega-1\}$.

$\bullet$ Colour vertices in $B_{2,3}$ with colours in $\{1,3,\ldots,\omega-1\}$.

$\bullet$ Colour $C_{2,4}$ with colour $\omega$.

If $A_4$ is not stable, then $B_{2,3}=\emptyset$ by \ref{item:19}. One can obtain a desired colouring as in Case 1.

\medskip
Now suppose that $A_3$ is stable. By symmetry, $A_4$ is stable. So, each $A_i$ is stable for $2\le i\le 5$.
We first claim that if both $A_2$ and $A_5$ are not empty, then each of $B_{2,3}$ and $B_{4,5}$ contains at most one vertex.
Let $a_2\in A_2$ and $a_5\in A_5$. By \ref{item:3}, it follows that $a_5a_2\in E$. If $b\in B_{2,3}$ is not adjacent to $a_5$,
then $\{b,v_3,v_4,v_5,a_5,a_2\}$ induces a $P_6$. So, $B_{2,3}$ is complete to $a_5$. 
Then $B_{2,3}$ contains at most one vertex, for otherwise two vertices in $B_{2,3}$, $a_5$ and $v_2$ induce a diamond.
Similarly, $B_{4,5}$ contains at most one vertex. 
This proves the claim.
Now if $A_2$ and $A_5$ are not empty, then since $\omega\ge 4$, the following is an $\omega$-colouring of $Q\cup A\cup B\cup C_{2,4}$:
$\{v_1,v_3,b_{4,5}\}\cup A_4\cup A_5$, $\{v_2,v_4\}$, $\{v_5,b_{2,3}\}\cup A_2\cup A_3$, and $C_{2,4}$.
So, we can assume by symmetry that $A_2=\emptyset$.
By \ref{item:20}, either $A_3=\emptyset$ or $B_{4,5}$ has at most one vertex.
One can easily verify that the following is an $\omega$-colouring of $Q\cup A\cup B\cup C_{2,4}$.

$\bullet$ Colour $Q=v_1,v_2,v_3,v_4,v_5$ with colours 3, 1, 2, 1, 2 in order.

$\bullet$ Colour $A_4$ and $A_5$ with 2 and 1, respectively, and colour $A_3$ with 3 if $A_3\neq \emptyset$.

$\bullet$ By \autoref{lem:two cliques}, there exists a $(\omega-2)$-colouring of $B_{4,5}\cup B_{2,3}$
using colours in $\{3,\ldots,\omega\}$. If $B_{4,5}$ contains  at most one vertex $b_{4,5}$, 
we may assume that $b_{4,5}$ is coloured with colour $\omega$.

$\bullet$ Colour $C_{2,4}$ with colour $\omega$.
\end{proof}

Finally, we deal with $C\cup F$.

\begin{claim}\label{lem:CF}
The subgraph $C\cup F$ is triangle-free.
\end{claim}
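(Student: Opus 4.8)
My plan is to show that a triangle in the subgraph induced by $C\cup F$ always forces a diamond, contradicting that $G$ is diamond-free. I will lean on three facts already established in \autoref{sec:imperfect}: each $C_{i,i+2}$ is stable (\ref{item:8}), the set $F$ is stable (\ref{item:13}), and each $F_i$ is anti-complete to every $C_{j,j+2}$ with $j\neq i-1$ (\ref{item:18}). Because $F$ and every $C_{i,i+2}$ are stable, any triangle $T$ in $C\cup F$ contains at most one vertex of $F$ and at most one vertex of each $C_{i,i+2}$. Hence $T$ falls into exactly one of two cases: either $T$ meets $F$, or $T\subseteq C$ with its three vertices lying in three \emph{distinct} sets $C_{a,a+2}$, $C_{b,b+2}$, $C_{c,c+2}$.

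The case where $T$ meets $F$ is immediate. If $f\in F_i$ is a vertex of $T$, the other two vertices of $T$ are neighbours of $f$ within $C\cup F$; since $F$ is stable, neither of them lies in $F$, so both lie in $C$. By \ref{item:18}, every $C$-neighbour of $f$ lies in $C_{i-1,i+1}$, so both of these vertices lie in the stable set $C_{i-1,i+1}$ and are therefore non-adjacent, contradicting that they are two vertices of a triangle. This rules out any triangle using a vertex of $F$.

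For the main case $T=\{c_a,c_b,c_c\}\subseteq C$ with distinct indices $a,b,c$, the plan is to locate a vertex of $Q$ adjacent to two of these vertices but not the third, which produces the diamond. The relevant observation is that $C_{i,i+2}$ and $C_{j,j+2}$ share a neighbour in $Q$ exactly when $i-j\equiv\pm 2\pmod 5$, and the graph on $\{1,\dots,5\}$ recording this relation is itself a $5$-cycle (the pentagram $1\!-\!3\!-\!5\!-\!2\!-\!4\!-\!1$), whose independence number is $2$. Thus among the three indices $a,b,c$ some pair, say $\{a,b\}=\{k,k-2\}$, gives sets $C_{a,a+2}$ and $C_{b,b+2}$ with a common neighbour $v_k\in Q$. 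The crucial point is that $v_k$ cannot also be adjacent to $c_c$: a vertex $v_k$ is a neighbour of $C_{x,x+2}$ only for $x\in\{k,k-2\}$, so $v_k$ is adjacent to at most two of the three distinct $C$-sets. Since $c\notin\{k,k-2\}$, the vertex $v_k$ is a common neighbour of $c_a$ and $c_b$ but not of $c_c$, and therefore $\{c_a,c_b,c_c,v_k\}$ induces a diamond, the desired contradiction.

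I expect the only mildly delicate step to be the last one, namely correctly identifying which pairs of $C$-sets share a $Q$-vertex and confirming, through the counting remark that no single $v_k$ can be adjacent to three distinct $C$-sets, that the shared vertex really does miss the third triangle vertex; once this is in hand the diamond appears automatically. Everything else reduces to bookkeeping driven directly by the stability properties \ref{item:8}, \ref{item:13}, and \ref{item:18}.
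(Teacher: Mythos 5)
Your proof is correct and follows essentially the same route as the paper: rule out a vertex of $F$ via \ref{item:18} and the stability of $C_{i-1,i+1}$, then find a vertex of $Q$ adjacent to exactly two of the three $C$-vertices to force a diamond. The only difference is cosmetic — the paper enumerates the two possible index patterns of three distinct $C$-sets explicitly, while you unify them with the pentagram/independence-number observation.
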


\begin{proof}
Suppose, by contradiction, that $C \cup F$ contains a triangle $T$ with vertices $h_i$ for $i=1,2,3$. 
Since $F$ is stable by \ref{item:13}, it follows that $T$ contains at least two vertices from $C$. 
Moreover, vertices in $T\cap C$ are in different $C_{i,i+2}$,  since each $C_{i,i+2}$ is stable by \ref{item:8}.
If $T$ contains a vertex of $F$, then the other two vertices of $T$ are from $C_{i-1,i+1}$ by \ref{item:18}.
But this contradicts the fact that $C_{i-1,i+1}$ is stable. So, all vertices of $T$ are in $C$.
If the three vertices of $T$ are from $C_{i,i+2}$, $C_{i+1,i+3}$, and $C_{i+2,i+4}$ for some $i$, 
then $\{h_1, h_2, h_3, v_{i+2}\}$ induces a diamond in $G$.
If the three vertices of $T$ are from $C_{i,i+2}$, $C_{i,i-2}$, and $C_{i+2,i+4}$ for some $i$, 
then $\{h_1, h_2, h_3, v_{i+2}\}$ induces a diamond in $G$. 
\end{proof}

\begin{claim}\label{lem:CFminusCi}
For each $1\le i\le 5$, the subgraph $(C\cup F)\setminus C_{i,i+2}$ is $3$-colourable.
\end{claim}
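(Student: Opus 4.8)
The plan is to invoke \autoref{thm:P6C3}. Since $H := (C\cup F)\setminus C_{i,i+2}$ is an induced subgraph of the $P_6$-free graph $G$, it is $P_6$-free, and it is triangle-free by \autoref{lem:CF}; hence $H$ is $(P_6,K_3)$-free. By \autoref{thm:P6C3}, $H$ is $3$-colourable if and only if it contains no induced Gr\"otzsch graph. Thus the whole task reduces to proving that $H$ has no induced copy of the Gr\"otzsch graph (\autoref{fig:Grotzsch}).

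To turn this into a finite problem, I would first fix $i$ using the symmetry of $Q$ and exploit the structure established in \autoref{sec:imperfect}. Each $C_{j,j+2}$ is stable by \ref{item:8}; $F$ is stable with $|F_j|\le 1$ by \ref{item:13}; and by \ref{item:18} each $F_j$ is anti-complete to every $C_{k,k+2}$ except $C_{j-1,j+1}$, so after removing $C_{i,i+2}$ one $F$-part becomes isolated and each $F$-part is adjacent to at most one of the remaining $C$-parts. The decisive observation is that the adjacency of a vertex to $Q$ is \emph{completely determined} by the part containing it. Consequently, a hypothetical induced Gr\"otzsch subgraph $M\subseteq H$ (on $11$ vertices) determines a map $\psi$ from $V(M)$ to the parts satisfying: vertices sent to a common $C$-part are pairwise non-adjacent in $M$; each $F$-part receives at most one vertex; the $M$-neighbours of a vertex placed in an $F$-part all lie in its unique linked $C$-part; and no edge of $M$ joins two $F$-parts.

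The search itself proceeds as follows. For every map $\psi$ consistent with the constraints above, the edges between the $11$ vertices of $M$ and the five vertices of $Q$ are forced by part membership, so $G[V(M)\cup Q]$ is a fully determined graph on $16$ vertices. I would verify by computer that every such $G[V(M)\cup Q]$ contains an induced $P_6$ or an induced diamond, contradicting that $G$ is $(P_6,\text{diamond})$-free. As there are only finitely many maps $\psi$ (at most $p^{11}$, where $p$ is the number of parts, cut down drastically by the consistency constraints and by the symmetry of $Q$), this is a finite verification. It shows that $H$ has no induced Gr\"otzsch graph, and \autoref{thm:P6C3} then yields that $H$ is $3$-colourable.

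The main obstacle is that the adjacency between two distinct remaining $C$-parts is not forced to be either complete or empty by the local diamond/$P_6$ rules, so one cannot finish by a short ``pattern-level'' argument. In particular, the auxiliary graph recording which parts may be adjacent can itself contain triangles and be $4$-chromatic, so one cannot simply appeal to the fact that the Gr\"otzsch graph is the smallest triangle-free $4$-chromatic graph. The obstruction to embedding the Gr\"otzsch graph surfaces only globally, from the simultaneous interaction of all $11$ vertices with $Q$, which is precisely what makes the exhaustive check over all placements necessary. Some care is also required to confirm that the search is genuinely finite and that the edges to $Q$ used to exhibit each forbidden subgraph are indeed forced by part membership, so that every obstruction found is a bona fide induced $P_6$ or diamond in $G$.
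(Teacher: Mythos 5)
Your proposal is correct and follows essentially the same route as the paper: reduce via \autoref{thm:P6C3} and the triangle-freeness of $C\cup F$ to excluding an induced Gr\"otzsch graph, then run a finite computer search over all placements of its $11$ vertices into the parts $F_j$ and $C_{j,j+2}$, using the fact that adjacency to $Q$ is forced by part membership so that each candidate $16$-vertex graph is fully determined and can be tested for an induced $P_6$ or diamond. The only cosmetic difference is that the paper enumerates all valid labellings into the whole of $C\cup F$ (finding $20$, each using every $C_{i,i+2}$) rather than searching directly in $(C\cup F)\setminus C_{i,i+2}$, which is a trivially equivalent formulation of the same computation.
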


\begin{proof}
We show via a computer program that $(C\cup F)\setminus C_{i,i+2}$ does not contain the Gr\"{o}tzsch graph as an induced subgraph.
Since $C\cup F$ is triangle-free by \autoref{lem:CF}, it follows from \autoref{thm:P6C3} that  $(C\cup F)\setminus C_{i,i+2}$ is 3-colourable.

We now explain the algorithm and give the pesudocode.
Let $H$ be an induced copy of the Gr\"{o}tzsch graph (see \autoref{fig:Grotzsch}).
For each vertex $v\in V(H)$, a {\em label} of $v$ is an element in the set
$S=\{1,2,3,4,5,13,14,24,25,35\}$. The meaning of the label of $v$ is to indicate
where $v$ comes from. For example, if the label of $v$ is 1, it indicates that $v\in F_1$,
and if the label of $v$ is 13, it indicates that $v\in C_{1,3}$.
A {\em labelling} of $H$ is a function $\mathcal{L}:V(H)\rightarrow S$.
We denote by $H_{\mathcal{L}}$ the copy of $H$ with labelling $\mathcal{L}$.
For a labelling  $\mathcal{L}$ of $H$, we say that $\mathcal{L}$ is {\em valid}
if the graph obtained by taking the union of $H$ and $Q$, 
where the edges between $H$ and $Q$ are connected according to $\mathcal{L}$,
is ($P_6$, diamond)-free.

We use a simple recursive algorithm that uses certain reduction rules to find all valid labellings of $H$.
The algorithm \textsc{Main} (see \autoref{alg:main}) takes two parameters $\mathcal{L}$ and $\mathcal{F}$
as inputs, where $\mathcal{L}$ is a function from $V(H)$ to the power set $2^S$ of $S$ and $\mathcal{F}$ is a set to
store valid labellings of $H$, and returns a set $\mathcal{F}$ of valid labellings of $H$ where the label of each $v\in V(H)$ is in $\mathcal{L}(v)$.
The algorithm recursively checks if a vertex $v\in V(H)$ can be labelled with label $\ell$  for each label $\ell\in \mathcal{L}(v)$. 
Once a label $\ell$ is assigned to $v$, the algorithm calls the subroutine \textsc{UpdateLabels} (see \autoref{alg:subroutine}) to
update possible labels for other vertices using certain reduction rules (see \ref{item:R1}-\ref{item:R3} below). 
If at some point $\mathcal{L}(v)$ becomes empty for some $v\in V(H)$, we discard the search, since
the current labelling is not valid. If at some point $\mathcal{L}(v)$ becomes a singleton for each $v\in V(H)$, then $\mathcal{L}$ is a 
labelling of $H$. The algorthm then checks whether it is valid. If so,  the labelling is added to the list $\mathcal{F}$ of valid labelling,
and is discarded otherwise.

\begin{enumerate}[label= Rule \arabic*]

\item If $v\in V(H)$ has label $i(i+2)$, then the label of each neighbour of $v$ is in $(\{13,14,24,25,35\}\setminus \{i(i+2)\})\cup \{i+1\}$.
\label{item:R1}

This follows from \ref{item:8} and \ref{item:18}. \e

\item If $v\in V(H)$ has label $i$, then the only possible label for each neighbour of $v$ is $(i-1)(i+1)$.
\label{item:R2}

This follows from \ref{item:8}, \ref{item:13} and \ref{item:18}. \e

\item If $v\in V(H)$ has two neighbours $u_1$ and $u_2$ whose labels are $i(i+2)$, 
then the label of $v$ cannot contain the numbers $i$ and $i+2$ in its label.
\label{item:R3}

Suppose not. Then $v$ is adjacent to $v_i$ or $v_{i+2}$. But now $\{v,u_1,u_2,v_i\}$ or $\{v,u_1,u_2,v_{i+2}\}$ induces a diamond. \e

\end{enumerate}

\begin{algorithm}[H]

\SetAlgoLined

\KwIn{A function $\mathcal{L} \colon V(H)\rightarrow 2^S$.}

\KwOut{All valid labellings of $H$ such that the label of each $v\in V(H)$ is in $\mathcal{L}(v)$.}

\medskip
\tcp*[h]{Base cases}

\If{{\em there exists a vertex} $v\in V(H)$ {\em such that }$\mathcal{L}(v)=\emptyset$}
{
	return $\mathcal{F}$;
}

\ElseIf{$|\mathcal{L}(v)|=1$ {\em for each $v\in V(H)$}}
{
	Let $H'=H\cup Q$ where the edges between $H$ and $Q$ are constructed according to $\mathcal{L}$.
	
	\If{{\em $H'$ is not ($P_6$,diamond)-free}}
	{
		return $\mathcal{F}$;
	}
	\Else
	{
		return $\mathcal{F}\cup \{H_{\mathcal{L}}\}$;
	}
}

\medskip
\tcp*[h]{Recursive call}

\medskip
\Else
{
	\For{{\em each $v\in V(H)$ with $|\mathcal{L}(v)|\ge 2$}}
	{
		\For{{\em each label $\ell\in \mathcal{L}(v)$}}
		{
			$\mathcal{L'}:=\mathcal{L}$;
			
			$\mathcal{L'}(v):=\ell$;
			
			\textsc{UpdateLabels}($\mathcal{L'}$);
			
			return \textsc{Main}($\mathcal{L'}$, $\mathcal{F}$);
		}
	}
}

\medskip
\caption{A recursive algorithm \textsc{Main}($\mathcal{L}$, $\mathcal{F}$).} \label{alg:main}
\end{algorithm}

\begin{algorithm}[H]

\SetAlgoLined

%

\For{{\em each $v\in V(H)$}}
{
	\tcp*[h]{\ref{item:R1}}
	
	\If{$\mathcal{L}(v)=i(i+2)$}
	{
		\For{{\em each $u\in N_H(v)$}}
		{
			$\mathcal{L}(u):=\mathcal{L}(u)\cap ((\{13,14,24,25,35\}\setminus \{i(i+2)\})\cup \{i+1\})$;
		}
	}
	
	\tcp*[h]{\ref{item:R2}}
	
	\If{$\mathcal{L}(v)=i$}
	{
		\For{{\em each $u\in N_H(v)$}}
		{
			$\mathcal{L}(u):=\mathcal{L}(u)\cap \{(i-1)(i+1)\}$;
		}
	}
	
	\tcp*[h]{\ref{item:R3}}
	
	\If{{\em there exist $u_1,u_2\in N_H(v)$ such that $\mathcal{L}(u_1)=\mathcal{L}(u_2)=i(i+2)$}}
	{
		$\mathcal{L}(v):=\mathcal{L}(v)\cap (S\setminus \{i(i+2), (i-2)(i),(i+2)(i+4),i,i+2\})$;
	}
}

\medskip
\caption{The subroutine \textsc{UpdateLabels}($\mathcal{L}$).} \label{alg:subroutine}
\end{algorithm}

By symmetry, we can assume the label of vertex 0 in $H$ is $\{1\}$ or $\{25\}$. To find all valid labellings of $H$, we make a single call to \textsc{Main}($\mathcal{L}^*$, $\emptyset$)
where $\mathcal{L}^*:V(H)\rightarrow 2^S$ such that $\mathcal{L}^*(0) = \{1, 25\}$ and $\mathcal{L}^*(v)= S$ for each $v \in V(H)\setminus \{0\}$.
The algorithm returns 20 valid labellings of $H$. Each of these labellings needs a vertex from $C_{i,i+2}$
for each $1\le i\le 5$. This shows that $(C\cup F)\setminus C_{i,i+2}$ does not contain the Gr\"{o}tzsch graph.
The output of the algorithm is given in the Appendix. 
\end{proof}

We now give a $(\omega+3)$-colouring of $G$.
By \autoref{lem:AB}, there exists an index $i$ such that $Q\cup A\cup B\cup C_{i,i+2}$ can be coloured with
colours from $\{1,2,\ldots,\omega\}$, and that $C_{i,i+2}$ is coloured with colour $\omega$.
By \autoref{lem:colournonneighbours}, we can colour each component of $Z$ with colours from $\{1,2,\ldots,\omega-1\}$.
By \autoref{lem:CFminusCi}, we can colour $(C\cup F)\setminus C_{i,i+2}$ with colours $\omega+1,\omega+2,\omega+3$.
This is a $(\omega+3)$-colouring of $G$ (See \autoref{fig:colouring}).
\end{proof}

\section{Conclusion}\label{sec:conclude}

In this paper, we proved that each ($P_6$, diamond)-free graph $G$ satisfies $\chi(G)\le \omega(G)+3$.
This answers an open question in \cite{KM18} and gives an optimal $\chi$-bound for the class.
It is not difficult to see that one can turn our proof into a polynomial-time algorithm for colouring a 
($P_6$, diamond)-free graph $G$ using $\omega(G)+3$ colours. A natural question is whether
one can decide the chromatic number of these graphs in polynomial time. To answer this question, it may
be useful to consider whether there exists a structure theorem for the class of 
($P_6$, diamond)-free graphs. We point out that Chudnovsky, Seymour, Spirkl and Zhong \cite{CSSZ18} give such a structure theorem
for a subclass, namely $(P_6,K_3)$-free graphs.

\section*{Appendix}

We use an array of size 11 to represent a valid labelling of $H$.
The array is indexed by the vertices of $H$ (see \autoref{fig:Grotzsch}), which means that
the first element of the array is the label for vertex 0 of $H$,
and the second element of the array is the label for vertex 1 of $H$, etc.
The algorithm returns the following 20 valid labellings of $H$.

\noindent 
[[25], [13], [13], [14], [13], [14], [25], [35], [24], [35], [24]]\break
[[25], [13], [13], [14], [13], [14], [35], [25], [24], [35], [24]]\break
[[25], [13], [14], [13], [13], [14], [35], [24], [25], [35], [24]]\break
[[25], [13], [14], [13], [13], [14], [35], [24], [35], [25], [24]]\break
[[25], [13], [14], [13], [14], [13], [25], [24], [35], [24], [35]]\break
[[25], [13], [14], [13], [14], [13], [35], [24], [35], [24], [25]]\break
[[25], [13], [14], [13], [14], [14], [35], [24], [35], [24], [25]]\break
[[25], [13], [14], [13], [14], [14], [35], [24], [35], [25], [24]]\break
[[25], [13], [14], [14], [13], [14], [35], [24], [25], [35], [24]]\break
[[25], [13], [14], [14], [13], [14], [35], [25], [24], [35], [24]]\break
[[25], [14], [13], [13], [14], [13], [24], [25], [35], [24], [35]]\break
[[25], [14], [13], [13], [14], [13], [24], [35], [25], [24], [35]]\break
[[25], [14], [13], [14], [13], [13], [24], [35], [24], [25], [35]]\break
[[25], [14], [13], [14], [13], [13], [24], [35], [24], [35], [25]]\break
[[25], [14], [13], [14], [13], [14], [24], [35], [24], [35], [25]]\break
[[25], [14], [13], [14], [13], [14], [25], [35], [24], [35], [24]]\break
[[25], [14], [13], [14], [14], [13], [24], [35], [24], [25], [35]]\break
[[25], [14], [13], [14], [14], [13], [24], [35], [25], [24], [35]]\break
[[25], [14], [14], [13], [14], [13], [24], [25], [35], [24], [35]]\break
[[25], [14], [14], [13], [14], [13], [25], [24], [35], [24], [35]]\break

\end{document}